\numberwithin{equation}{section}
\theoremstyle{plain}
\newtheorem{theorem}{Theorem}[section]
\newtheorem{lemma}{Lemma}[section]
\newtheorem{proposition}{Proposition}[section]
\theoremstyle{definition}
\newtheorem{remark}{Remark}[section]
\def\BR{\mathbb R}
\def\rd{\mathrm d}
\def\e{\mathrm e}
\def\La{\Lambda}
\def\Om{\Omega}
\def\al{\alpha}
\def\be{\beta}
\def\ga{\gamma}
\def\de{\delta}
\def\ve{\varepsilon}
\def\ze{\zeta}
\def\te{\theta}
\def\la{\lambda}
\def\si{\sigma}
\def\f{\frac}
\def\nb{\nabla}
\def\ov{\overline}
\def\pa{\partial}
\def\tri{\triangle}
\def\wt{\widetilde}
\title[Energy decay and blow-up of viscoelastic wave equations]{Energy decay and blow-up of viscoelastic wave equations with polynomial nonlinearity and damping}
\author[Qingqing Peng]{Qingqing Peng$^{1,2}$}
\thanks{$^1$School of Mathematics and Statistics \& Hubei Key Laboratory of Engineering Modeling and Scientific Computing, Huazhong University of Science and Technology, Wuhan 430074, China.
}
\thanks{$^2$Department of Mathematics, Kyoto University, Kitashirakawa-Oiwakecho, Sakyo-ku, Kyoto 606-8502, Japan.}
\author[Yikan Liu]{Yikan Liu$^{2,*}$}
\thanks{$^*$Corresponding author.  E-mail: {\tt liu.yikan.8z@kyoto-u.ac.jp}}
\keywords{Energy decay, blow-up, polynomial nonlinearity, frictional damping.}
\begin{document}

\maketitle

\begin{abstract}
This paper is concerned with the energy decay and the finite time blow-up of the solution to a viscoelastic wave equation with polynomial nonlinearity and weak damping. We establish explicit and general decay results for the solutions by imposing polynomial conditions on the relaxation function, provided that the initial energy is sufficiently small. Furthermore, we derive an upper bound for the blow-up time when the initial energy is less than the depth of the potential well by utilizing Levine's convexity method. Additionally, we provide a lower bound for the blow-up time if the solution blows up.
\end{abstract}


\section{Introduction}

Let $\BR_+:=(0,\infty)$, $\overline{\BR_+}:=[0,\infty)$ and $\Om\subset\BR^n$ ($n\ge3$) be a bounded and connected domain with a smooth boundary $\pa\Om$. The main purpose of this work is to study the energy decay rate and blow-up of the following initial-boundary value problem for a viscoelastic wave equation with polynomial nonlinearity and weak damping
\begin{equation}\label{1.1}
\left\{\begin{alignedat}{2}
& u_{tt}-\tri u+\int_0^t f(t-s)\tri u(s)\,\rd s+a(x)u_t=k(x)|u|^{p-2}u & \quad &\mbox{in }\Om\times\BR_+,\\
& u=u^0,\ u_t=u^1 & \quad &\mbox{in }\Om\times\{0\},\\
& u=0 & \quad &\mbox{on }\pa\Om\times\BR_+.
\end{alignedat}\right.
\end{equation}
Here $u^0\in H_0^1(\Om)$, $u^1\in L^2(\Om)$ and $2<p<\f{2n-2}{n-2}$. The coefficient $a(x)$ of the damping term is given as
$$
a(x)=|x|^{-\si},\quad0\le\si\le2.
$$
Moreover, the space-dependent coefficient $k(x)$ satisfies
$$
k\in C^1(\ov\Om),\quad k\ge0\mbox{ on }\ov\Om.
$$

Recent decades have witnessed significant progress regarding well-posedness and energy decay in wave equations with either weak or strong damping. Webb \cite{W1} was among the first to investigate the well-posedness of strong solutions through operator theory while discussing long-time behavior using Lyapunov stability techniques applied to semilinear wave equations with strong damping and external forcing. In particular, Gazzola and Squassina \cite{G1} examined semilinear strongly damped wave equations incorporating frictional damping. They established both existence and nonexistence results for solutions when the initial energy is less than the depth potential well. Meanwhile, they also shown blow-up results by employing potential well methods along with Levine's concavity technique. Subsequently, Ma and Fang \cite{M1} demonstrated both existence and nonexistence of global weak solutions for the wave equations. They derived energy decay estimates via potential well methods combined with logarithmic Sobolev inequalities. Lian and Xu \cite{L1} further explored the global existence, energy decay and infinite-time blow-up using methodologies akin to those presented in \cite{M1}. Di, Shang and Song \cite{D1} investigated semilinear wave equations characterized by logarithmic nonlinearity coupled with strong damping. They proved global well-posedness along with polynomial or exponential stability through potential methods paired with Lyapunov skills while also considering various blow-up scenarios.

Taking memory term into account, several results concerning decay and blow-up of solutions have been established under certain assumptions regarding the kernel function $f$. Messaoudi \cite{M2} demonstrated a general decay result for the viscoelastic wave equation,
which is not necessarily of exponential or polynomial types. Subsequently, Belhannache et al. \cite{B1} examined the asymptotic stability for a viscoelastic equation with nonlinear damping and very general relaxation functions. They proved the polynomial stability of the system by employing the multiplier method alongside properties of convex functions, provided that the kernel function satisfies
$$
g'(t)\le-\xi(t)G(g(t)),
$$
where $\xi(t)$ is a non-increasing function and $G$ is a strictly increasing and strictly convex $C^2$ function. Al-Gharabli \cite{A1} established stability result for a viscoelastic plate equation with logarithmic sources, where the conditions on the kernel function are similar to those in \cite{B1}. Messaoudi and Al-Khulaifi \cite{M3} investigated energy decay rates for viscoelastic wave equations with weak damping, where the kernel function satisfies
$$
g'(t)\le-\xi(t)g^q(t),\quad1\le q<\f32
$$
and $\xi(t)$ is a non-increasing function. Later, Al-Mahdi and Al-Gharabli \cite{A2} discussed the energy decay in infinite memory wave equations featuring nonlinear damping. Remarkably, Li and Gao \cite{F1} proved blow-up results for viscoelastic wave problems using Lyapunov methods and further determined the blow-up times. Ha and Park \cite{H1} also established blow-up results along with the local existence of solutions for viscoelastic wave equations incorporating logarithmic sources through potential well methods combined with Lyapunov techniques. Recently, the authors investigated similar decay properties for nonlocal viscoelastic equations with damping and nonlinearity in \cite{PL25,PL26}.

However, to our knowledge, there has been limited research focusing on models characterized by singular space-dependent coefficients of frictional damping $a(x)u_t$ and space-dependent coefficient of source $k(x)|u|^{p-2}u$. Most recently, there are some new advances on the study of hyperbolic equations without strong damping but with weighted weakly damping terms. For example, Pata and Zelik \cite{P1} proved that the asymptotic profile of the solution by a solution of the corresponding heat equation in the $L^2$ sense for the Cauchy problem of the wave equation with space-dependent coefficient of weak damping as
$$
u_{tt}-\tri u+V(x) u_t=0\quad\mbox{in }\BR^n\times\BR_+,
$$
where $V(x)=(1+|x|^2)^{-\al/2}$ with $\al\in[0,1)$. Later, Wakasugi \cite{W2} considered the decay property of solutions to the wave equation with space-dependent damping, absorbing nonlinearity as
$$
u_{tt}-\tri u+a(x) u_t+|u|^{p-1}u=0\quad\mbox{in }\Om\times\BR_+,
$$
where
$$
a(x)=a_0(1+|x|^2)^{-\al/2}\quad\mbox{or}\quad a_0(1+|x|^2)^{-\al/2}\le a(x)\le a_1(1+|x|^2)^{-\al/2}
$$
with some constants $a_0,a_1>0$ and $\al\in[0,1)$. The author proved how the amplitude of the damping coefficient, the power of nonlinearity and the decay rate of the initial data at the spatial infinity determine the decay rates of the energy and the $L^2$ norm of the solution. In addition, Yang and Fang \cite{Y1} investigated the global well-posedness and blow-up phenomena for a strongly damped wave equation with time-varying source and singular dissipation as
$$
u_{tt}-\tri u-\tri u_t+V(x)u_t=k(t)|u|^{p-1}u\quad\mbox{in }\Om\times\BR_+,
$$
where $V(x)=|x|^{-\si}$, $\si\in[0,2]$ and $k(t)$ is a non-increasing and nonnegative function. They investigated the local well-posedness and the global existence solution based on the cut-off technique, multiplier method, contraction mapping principle and the modified well method. Meanwhile, the blow-up results of solutions with arbitrarily positive initial energy and the lifespan of the blow-up solutions are derived.

Motivated by the above articles, we are interested in the energy decay rates and blow-up results of the viscoelastic wave equation with polynomial nonlinearity and weak damping. We prove the energy is exponential or polynomial decay for imposing some weak conditions on the kernel function, and extend the range of $q$ to $[1,2)$ when the initial energy satisfies some conditions which is only $1\le q<3/2$ for some papers, such as \cite{A2,M3}. In addition, the energy decay rates are faster those in \cite{W2,Y1}. We also establish the blow-up of solution for the initial energy $E(0)<d$. In fact, our results improve some papers mentioned above.

The rest of this article is organized as follows. In Section 2, we introduce notations, assumptions and recall the global well-posedness of solutions. In Section 3, we discuss the energy decay rates for \eqref{1.1} by using Lyapunov method and potential well method under certain conditions on the kernel function. Finally, Section 4 is devoted to the blow-up of the solution by using Levine's convexity method.


\section{Preliminary and well-posedness}

In this section, we fix notations and introduce some  basic definitions, important lemmas and some function spaces for the statements and proofs of our main results. Throughout this article, we denote the norm of the Lebesgue space $L^p(\Om)$ ($1\le p\le\infty$) by $\|\cdot\|_p$, and the inner product of $L^2(\Om)$ by $(\,\cdot\,,\,\cdot\,)$.


\subsection{Some assumptions and definitions}

To begin with, we first fix some assumptions on the kernel function $f(t)$ in \eqref{1.1} which will be used later.\medskip

{\bf(A1)} The function $f\in C^1(\ov{\BR_+};\BR_+)$  is non-increasing and satisfies
\[
f(0)>0,\quad 1-\int_0^\infty f(s)\,\rd s:=\ell\in(0,1).
\]

{\bf(A2)} Under assumption (A1), the function $f(t)$ further satisfies the ordinary differential inequality
$$
f'(t)\le-\xi(t)f^q(t),\quad t\ge0,
$$
where $\xi\in C^1(\ov{\BR_+};\BR_+)$ is non-increasing and $q\in[1,3/2)$ is a constant.\medskip

Next, we introduce the energy functional corresponding with \eqref{1.1} as
\begin{equation}\label{2.5}
E(t):=\f12\|u_t(t)\|_2^2+\f12\left(1-\int_0^t f(s)\,\rd s\right)\|\nb u(t)\|_2^2+\f12(f\circ\nb u)(t)
-\f1p\int_\Om k|u(t)|^p\,\rd x,
\end{equation}
where
$$
(f\circ\nb u)(t):=\int_0^t f(t-s)\|\nb u (t)-\nb u(s)\|_2^2\,\rd s.
$$
By formally differentiating \eqref{2.5} and employing the original problem \eqref{1.1}, it is not difficult to calculate
\begin{equation}\label{2.6}
E'(t)=\f12(f'\circ\nb u)(t)-\f{f(t)}2\|\nb u (t)\|_2^2-\left\|\f{u_t(t)}{|\cdot|^{\si/2}}\right\|_2^2\le0.
\end{equation}
Then it is readily seen from \eqref{2.6} that
\begin{equation}\label{xx2.6}
E(t)+\int_0^t\left\|\f{u_s(s)}{|\cdot|^{\si/2}}\right\|_2^2\,\rd s\le E(0).
\end{equation}
Moreover, we define
\[
J(w):=\f\ell2\|\nb w\|_2^2-\f1p\int_\Om k|w|^p\,\rd x,\quad I(w):=\ell\|\nb w\|_2^2-\int_\Om k|w|^p\,\rd x,\quad w\in H_0^1(\Om)
\]
and
\begin{align}
N & :=\{w\in H_0^1(\Om)\setminus\{0\}\mid I(w)=0\},\nonumber\\
d & :=\inf_{w\in H_0^1(\Om)\setminus\{0\}}\left\{\sup_{\la>0}J(\la w)\right\}=\inf\limits_{w\in N}J(w),\label{2.10}
\end{align}
where $N$ is called the Nehari manifold and $d$ stands for the depth of the potential well. By the definitions of $E(t)$, $J(w)$ and $I(w)$, we also have
\begin{equation}\label{2.11}
J(w)=\left(\f12-\f1p\right)\ell\|\nb w\|_2^2+\f1p I(w)
\end{equation}
and
\begin{equation}\label{2.12}
E(t)\ge\f12\|u_t(t)\|_2^2+\f12(f\circ\nabla u)(t)+J(u(t))\ge J(u(t)).
\end{equation}

\begin{remark}
From the Gagliardo-Nirenberg multiplicative embedding inequality, it is easy to verify that $J$ and $I$ are continuous functionals on $H_0^1(\Om)$ (see \cite{D2,L2}).
\end{remark}


\subsection{Some lemmas and well-posedness}

In this subsection, we prepare several useful lemmas.

\begin{lemma}[see \cite{A3}]\label{llem1}
Let $r$ be a constant satisfying $2\le r\le 2_*=\f{2n}{n-2}$ for $n\ge3$, Then there is an optimal constant $B_r>0$ depending on $r$ such that
$$
\|w\|_r^r\le B_r\|\nb w\|_2^r,\quad\forall\,w\in H_0^1(\Om).
$$
\end{lemma}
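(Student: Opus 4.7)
The plan is to reduce the inequality to two classical results -- the Sobolev embedding at the critical exponent $r=2_*$ and Poincar\'e's inequality at $r=2$ -- and then use H\"older interpolation to fill in the intermediate range. Since $\Om\subset\BR^n$ is bounded with smooth boundary, I would first extend any $w\in H_0^1(\Om)$ by zero to obtain a function in $H^1(\BR^n)$ with $\|\nb w\|_2$ preserved, so that whole-space inequalities apply directly.

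For the endpoint $r=2_*$, the classical Sobolev inequality yields $\|w\|_{2_*}\le S_n\|\nb w\|_2$ with the sharp Sobolev constant $S_n$. For $r=2$, Poincar\'e's inequality on the bounded domain gives $\|w\|_2\le C_P\|\nb w\|_2$. Raising either bound to the $r$-th power produces the required form of the inequality at the endpoints.

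For $2<r<2_*$, I would interpolate via H\"older's inequality by writing $\f1r=\f\te2+\f{1-\te}{2_*}$ for the appropriate $\te\in(0,1)$. This gives
$$
\|w\|_r\le\|w\|_2^\te\|w\|_{2_*}^{1-\te}\le C_P^\te S_n^{1-\te}\|\nb w\|_2,
$$
and raising to the $r$-th power yields the inequality with constant $\bigl(C_P^\te S_n^{1-\te}\bigr)^r$. Together with the endpoint cases this establishes the inequality for all $r\in[2,2_*]$.

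To obtain an optimal constant, I would define
$$
B_r:=\sup_{w\in H_0^1(\Om)\setminus\{0\}}\f{\|w\|_r^r}{\|\nb w\|_2^r},
$$
which is finite by the preceding estimates. For $2\le r<2_*$, the Rellich-Kondrachov compact embedding $H_0^1(\Om)\hookrightarrow L^r(\Om)$ lets me apply the direct method of the calculus of variations to a maximizing sequence normalized by $\|\nb w\|_2=1$, producing a maximizer and hence the optimal $B_r$. The main obstacle is the critical case $r=2_*$, where this compactness fails and the supremum is typically not attained on a bounded domain; nevertheless $B_{2_*}$ remains finite by comparison with the sharp whole-space Sobolev constant, and a concentration-compactness analysis pins down its value. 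This delicate critical case is precisely why the statement is cited from \cite{A3} rather than reproved in detail.
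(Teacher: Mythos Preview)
The paper does not prove this lemma at all: it is stated with a bare citation to Adams \cite{A3} and used as a black box throughout. Your sketch is a correct outline of the standard argument (zero extension, endpoint Sobolev and Poincar\'e bounds, H\"older interpolation, then extraction of the optimal constant via Rellich--Kondrachov in the subcritical range), so you have supplied more than the paper itself contains. One minor caveat: the paper's applications (e.g.\ in Theorem~\ref{theorem4.2} with $r=2(p-1)$, which for $p$ near $\tfrac{2n-2}{n-2}$ can approach the critical exponent) rely only on finiteness of $B_r$, not on attainment, so your careful distinction about the critical case is sound but not strictly needed for the paper's purposes.
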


\begin{lemma}[see {\cite[Lemma 2.1]{D1}}]\label{lemma2.1}
For any fixed $w\in H_0^1(\Om)\setminus\{0\},$ there exists a unique $\la_*>0$ such that

{\rm(1)} $\f\rd{\rd\la}J(\la w)|_{\la=\la_*}=0$. More precisely, $J(\la w)$ is increasing for $0<\la<\la_*,$ decreasing for $\la_*<\la<\infty$ and attains its maximum at $\la=\la_*$.

{\rm(2)} $I(\la w)>0$ for $0<\la<\la_*,$ $I(\la w)<0$ for $\la_*<\la<\infty$ and $I(\la_*w)=0$.
\end{lemma}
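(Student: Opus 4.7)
The plan is to exploit the pure-power scaling structure of $J$ and $I$ and reduce everything to elementary one-variable calculus in $\la>0$. For fixed $w\in H_0^1(\Om)\setminus\{0\}$, direct substitution gives
\begin{equation*}
J(\la w)=\f{\ell\la^2}{2}\|\nb w\|_2^2-\f{\la^p}{p}\int_\Om k|w|^p\,\rd x,\qquad I(\la w)=\ell\la^2\|\nb w\|_2^2-\la^p\int_\Om k|w|^p\,\rd x,
\end{equation*}
so $\la\mapsto J(\la w)$ is a linear combination of $\la^2$ and $\la^p$ with $p>2$. Differentiating in $\la$ yields the bridge identity
\begin{equation*}
\f{\rd}{\rd\la}J(\la w)=\f{I(\la w)}{\la},
\end{equation*}
which will link statements (1) and (2).

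Next, writing $A:=\int_\Om k|w|^p\,\rd x$ and $B:=\ell\|\nb w\|_2^2>0$ and assuming $A>0$ (the only nondegenerate case; see the remark below), I would factor the derivative as
\begin{equation*}
\f{\rd}{\rd\la}J(\la w)=\la\bigl(B-\la^{p-2}A\bigr),
\end{equation*}
whose unique positive zero is $\la_*=(B/A)^{1/(p-2)}$. Because $\la^{p-2}$ is strictly increasing on $(0,\infty)$, the bracket $B-\la^{p-2}A$ is positive on $(0,\la_*)$ and negative on $(\la_*,\infty)$; combined with the prefactor $\la>0$, this establishes the monotonicity assertion in (1) and identifies $\la_*$ as the unique maximizer of $J(\la w)$.

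Part (2) is then a direct consequence of the identity $I(\la w)=\la\,\f{\rd}{\rd\la}J(\la w)$: the sign of $I(\la w)$ agrees with that of $\f{\rd}{\rd\la}J(\la w)$ for every $\la>0$, so $I(\la w)>0$ on $(0,\la_*)$, $I(\la w)<0$ on $(\la_*,\infty)$, and $I(\la_*w)=0$. The only delicate point, and the one I would be most careful about, is the degenerate case $A=0$: if $w$ vanishes $k$-almost everywhere then $I(\la w)\equiv B\la^2>0$ for every $\la>0$ and no $\la_*$ exists. This case is tacitly excluded by the standing hypothesis $k\ge0$ together with the applications of the lemma in this paper, in which one always works with states satisfying $\int_\Om k|w|^p\,\rd x>0$; apart from this observation, the argument is entirely one-variable calculus and requires no PDE or Sobolev-embedding input.
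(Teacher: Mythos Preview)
Your argument is correct and is precisely the standard elementary calculus proof of this fibering-map lemma. Note that the paper itself does not give a proof at all: the statement is imported verbatim by citing \cite[Lemma 2.1]{D1}, so there is no ``paper's own proof'' to compare against beyond that reference, whose argument is essentially the one you wrote. Your observation about the degenerate case $A=\int_\Om k|w|^p\,\rd x=0$ is well taken; as stated, the lemma tacitly requires $A>0$, which holds for instance whenever $k>0$ on a set of positive measure intersecting the support of $w$, and this is indeed the only regime relevant to the subsequent potential-well analysis.
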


According to Lemma \ref{lemma2.1}(2), we conclude that the Nehari manifold $N$ is nonempty, so that the depth $d$ defined by \eqref{2.10} makes sense.

\begin{lemma}[see {\cite[Lemma 2.2]{D1}}]\label{lemma2.2}
The potential well depth $d$ defined by \eqref{2.10} is positive and there exists a positive function $u\in N$ such that $J(u)=d$.
\end{lemma}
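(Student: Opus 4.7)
The plan is to prove positivity of $d$ and existence of a minimizer by the direct method of the calculus of variations, exploiting the subcriticality $p<2n/(n-2)=2_*$ to gain compactness in $L^p(\Om)$.

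\smallskip\noindent\textbf{Step 1 (positivity of $d$).} For any $w\in N$, the identity $I(w)=0$ gives $\ell\|\nb w\|_2^2=\int_\Om k|w|^p\,\rd x$. Bounding the right-hand side by $\|k\|_\infty\|w\|_p^p$ and invoking the Sobolev embedding (Lemma \ref{llem1} with $r=p$, which is allowed since $2<p<2_*$) yields
\[
\ell\|\nb w\|_2^2\le\|k\|_\infty B_p\|\nb w\|_2^p,
\]
so $\|\nb w\|_2\ge(\ell/(\|k\|_\infty B_p))^{1/(p-2)}=:c_0>0$ uniformly on $N$. Plugging this into \eqref{2.11} with $I(w)=0$ gives $J(w)=(\f12-\f1p)\ell\|\nb w\|_2^2\ge(\f12-\f1p)\ell c_0^2>0$, hence $d>0$.

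\smallskip\noindent\textbf{Step 2 (minimizing sequence and weak limit).} Pick $\{w_k\}\subset N$ with $J(w_k)\to d$. Since $I(w_k)=0$, the identity \eqref{2.11} reads $J(w_k)=(\f12-\f1p)\ell\|\nb w_k\|_2^2$, so $\{w_k\}$ is bounded in $H_0^1(\Om)$. Passing to a subsequence, $w_k\rightharpoonup u$ weakly in $H_0^1(\Om)$; by the Rellich--Kondrachov theorem (using $p<2_*$) we also have $w_k\to u$ strongly in $L^p(\Om)$, so $\int_\Om k|w_k|^p\,\rd x\to\int_\Om k|u|^p\,\rd x$.

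\smallskip\noindent\textbf{Step 3 ($u\ne0$ and $I(u)=0$).} If $u=0$ the above convergence would force $\int_\Om k|w_k|^p\,\rd x\to0$, contradicting $\ell\|\nb w_k\|_2^2\ge\ell c_0^2>0$ combined with $I(w_k)=0$. Hence $u\not\equiv0$. By weak lower semicontinuity,
\[
I(u)=\ell\|\nb u\|_2^2-\int_\Om k|u|^p\,\rd x\le\liminf_{k\to\infty}\ell\|\nb w_k\|_2^2-\lim_{k\to\infty}\int_\Om k|w_k|^p\,\rd x=0.
\]
Suppose $I(u)<0$. By Lemma \ref{lemma2.1} there exists $\la_*\in(0,1)$ with $I(\la_*u)=0$, i.e.\ $\la_*u\in N$. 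Then using \eqref{2.11} and weak lower semicontinuity,
\[
d\le J(\la_*u)=\left(\f12-\f1p\right)\ell\la_*^2\|\nb u\|_2^2\le\la_*^2\left(\f12-\f1p\right)\ell\liminf_{k\to\infty}\|\nb w_k\|_2^2=\la_*^2\,d<d,
\]
a contradiction. Therefore $I(u)=0$ and $u\in N$.

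\smallskip\noindent\textbf{Step 4 (minimum is attained and positivity).} Since $u\in N$, one has $J(u)\ge d$, while \eqref{2.11} and weak lower semicontinuity give $J(u)=(\f12-\f1p)\ell\|\nb u\|_2^2\le\liminf_{k\to\infty}J(w_k)=d$, so $J(u)=d$. Finally, because $k\ge0$, the functionals satisfy $J(|u|)=J(u)$ and $I(|u|)=I(u)$, so $|u|\in N$ also minimizes $J$; replacing $u$ by $|u|$ yields a nonnegative (i.e.\ positive on a set of positive measure) minimizer, as required.

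\smallskip\noindent\textbf{Main obstacle.} The delicate point is excluding $I(u)<0$: weak lower semicontinuity only gives the inequality $I(u)\le0$, not equality. The fix is the Nehari-type rescaling via Lemma \ref{lemma2.1}, which projects $u$ onto $N$ with a factor $\la_*<1$ and derives the contradiction $d\le\la_*^2 d<d$. This step is where the specific structure of $J$ and $I$ (degree-$2$ versus degree-$p$ homogeneity) is essential.
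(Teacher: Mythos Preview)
The paper does not supply its own proof of this lemma; it simply cites \cite[Lemma~2.2]{D1}. Your argument is the standard direct-method proof on the Nehari manifold (positivity via the Sobolev embedding and \eqref{2.11}, boundedness of a minimizing sequence, Rellich--Kondrachov compactness from $p<2_*$, and the rescaling trick of Lemma~\ref{lemma2.1} to exclude $I(u)<0$), and it is correct; this is essentially what one finds in the cited reference. One small remark: your final step produces a nonnegative minimizer $|u|\not\equiv0$, whereas the word ``positive'' in the statement, if read strictly, would require $u>0$ a.e.\ and hence an extra strong-maximum-principle argument applied to the Euler--Lagrange equation; in this literature, however, ``positive'' is usually meant in the weaker sense you indicate, so this is not a genuine gap.
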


Next, we define two subsets of $H_0^1(\Om)\times L^2(\Om)$ related to problem \eqref{1.1}. For $t\ge0$, set
\begin{align}
W(t) & :=\{(u(t),u_t(t))\in(H_0^1(\Om)\setminus\{0\})\times(L^2(\Om)\setminus\{0\})\mid E(t)<d,I(u(t))>0\},\nonumber\\
V(t) & :=\{(u(t),u_t(t))\in(H_0^1(\Om)\setminus\{0\})\times(L^2(\Om)\setminus\{0\})\mid E(t)<d,I(u(t))<0\}.\label{eq-def-V}
\end{align}
It is obvious that $W(t)\cap V(t)=\emptyset$. Now we recall the local and global existence results of solutions.

\begin{proposition}[Local existence]\label{rtheorem1}
Let $(u^0,u^1)\in H_0^1(\Om)\times L^2(\Om)$ be given and assumption {\rm(A1)} hold. Then there exists $T>0$ such that problem \eqref{1.1} has a unique local weak solution $u$ on $\overline\Om\times[0,T]$ such that
$$
u\in C^1([0,T];\,L^2(\Om))\cap C([0,T];\,H_0^1(\Om)).
$$
\end{proposition}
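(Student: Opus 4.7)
The plan is to combine a Faedo--Galerkin approximation scheme with a contraction-mapping argument in a short time interval. Let $\{w_j\}_{j=1}^\infty$ be an orthonormal basis of $L^2(\Om)$ consisting of Dirichlet eigenfunctions of $-\tri$, and set $V_m:=\mathrm{span}\{w_1,\dots,w_m\}$. For each $m$, I would seek approximate solutions $u^m(t)=\sum_{j=1}^m c_j^m(t)w_j$ satisfying the projected ODE system obtained by testing \eqref{1.1} against each $w_j$ and truncating the initial data in $V_m$. Standard ODE theory yields local-in-time existence of $c_j^m$ on some interval $[0,T_m]$.

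The next step is to derive uniform a priori estimates in $m$ that extend $T_m$ to a common interval $[0,T]$. Multiplying by $\dot c_j^m$ and summing over $j$ yields the energy identity
\[
\f12\f\rd{\rd t}\left(\|u_t^m\|_2^2+\|\nb u^m\|_2^2\right)+\left\|\f{u_t^m}{|\cdot|^{\si/2}}\right\|_2^2=\int_\Om k|u^m|^{p-2}u^mu_t^m\,\rd x-\int_0^t f(t-s)(\nb u^m(s),\nb u_t^m(t))\,\rd s.
\]
The singular damping term is controlled by Hardy's inequality: since $0\le\si\le2$ and $u_t^m\in H_0^1$ at the approximation level, $\||\cdot|^{-\si/2}u_t^m\|_2\le C\|\nb u_t^m\|_2$, but more importantly this term is nonnegative and therefore can simply be discarded when deriving the upper bound. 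The nonlinear source is estimated through the Sobolev embedding $H_0^1(\Om)\hookrightarrow L^{2(p-1)}(\Om)$, which holds thanks to $2<p<\f{2n-2}{n-2}$, giving $\int_\Om k|u^m|^{p-2}u^m u_t^m\,\rd x\le\|k\|_\infty\|u^m\|_{2(p-1)}^{p-1}\|u_t^m\|_2\le C\|\nb u^m\|_2^{p-1}\|u_t^m\|_2$. The memory term is handled by integration by parts in time combined with assumption (A1), using $\int_0^t f(s)\,\rd s\le1-\ell<1$. A Gronwall-type argument then provides a bound for $u^m$ in $L^\infty(0,T;H_0^1(\Om))$ and for $u_t^m$ in $L^\infty(0,T;L^2(\Om))$ uniform in $m$, provided $T$ is sufficiently small.

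From these bounds I would extract a subsequence converging weakly-$*$ in the relevant spaces and, after invoking Aubin--Lions to upgrade to strong convergence of $u^m$ in $L^2(0,T;L^{2(p-1)}(\Om))$, pass to the limit in every term of the weak formulation; the nonlinearity passes to the limit because $|u|^{p-2}u$ is locally Lipschitz in $L^{2(p-1)}$, and the convolution term passes trivially by the boundedness of $f$ on $[0,T]$. This yields a weak solution with the claimed regularity $u\in C([0,T];H_0^1(\Om))\cap C^1([0,T];L^2(\Om))$, the continuity in time coming from the standard argument that the energy is well defined and weakly continuous.

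The main obstacle is uniqueness, where the singular weight $|x|^{-\si}$ at $\si=2$ is borderline. Given two solutions $u,v$ with the same data, set $w:=u-v$ and test the difference equation against $w_t$. The frictional damping contributes $\||x|^{-\si/2}w_t\|_2^2\ge0$, which can simply be dropped; the nonlinearity is estimated by the mean value form $||u|^{p-2}u-|v|^{p-2}v|\le C(|u|^{p-2}+|v|^{p-2})|w|$ combined with H\"older and Sobolev embedding, yielding an inequality $\f\rd{\rd t}(\|w_t\|_2^2+\|\nb w\|_2^2)\le C(u,v)(\|w_t\|_2^2+\|\nb w\|_2^2)+(\mbox{memory remainder})$, and the memory term is absorbed by Young's inequality for convolutions. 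A Gronwall argument closes the estimate and forces $w\equiv0$, which is the delicate step since the constant $C(u,v)$ depends on the $H_0^1$ norms of both solutions and must be controlled on $[0,T]$ using the a priori bound already obtained.
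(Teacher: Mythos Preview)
Your proposal is correct and follows exactly the route the paper indicates: the paper does not give a detailed argument but simply states that the result is obtained ``by the Faedo-Galerkin method and the fixed point theorem'' with a reference to \cite{H1,Y1}, which is precisely your Galerkin-plus-contraction scheme. Your handling of the singular damping (dropping the nonnegative term), the source (via the embedding $H_0^1\hookrightarrow L^{2(p-1)}$ available from $p<\frac{2n-2}{n-2}$), and the memory term (via (A1)) matches the standard treatment in those references.
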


One can easily show Proposition \ref{rtheorem1} by the Faedo-Galerkin method and the fixed point theorem. See \cite{H1,Y1} for a detailed proof.

\begin{proposition}[Global existence]\label{theorem1}
Let $(u^0,u^1)\in W(0)$ be given and assumption {\rm(A1)} hold. Then there exists a unique global weak solution $u$ to problem \eqref{1.1} and $(u(t),u_t(t))\in W(t)$ for $0<t<\infty$.
\end{proposition}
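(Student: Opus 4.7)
The plan is to combine Proposition \ref{rtheorem1} with an invariance-of-the-stable-set argument. I would first extract from Proposition \ref{rtheorem1} a maximal local weak solution $u$ on some interval $[0,T_{\max})$ with $T_{\max}\in(0,\infty]$, and then establish two claims in tandem: (a) $(u(t),u_t(t))\in W(t)$ for every $t\in[0,T_{\max})$, and (b) $T_{\max}=\infty$.

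For (a), set
\[
T^*:=\sup\bigl\{t\in[0,T_{\max})\bigm| (u(s),u_s(s))\in W(s)\text{ for all }s\in[0,t]\bigr\}
\]
and argue by contradiction, assuming $T^*<T_{\max}$. The energy inequality \eqref{xx2.6} gives $E(t)\le E(0)<d$ automatically, so the only way $W(t)$ can fail at $t=T^*$ is through $I(u(T^*))\le 0$. By continuity of $u\in C([0,T_{\max});H_0^1(\Om))$, and continuity of $I$ on $H_0^1(\Om)$ via Gagliardo-Nirenberg (as noted after Remark 2.1, using $p<2n/(n-2)$), $I(u(t))$ is continuous in $t$; since $I(u(t))>0$ on $[0,T^*)$, necessarily $I(u(T^*))=0$. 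If $u(T^*)\ne 0$, then $u(T^*)\in N$, so by Lemma \ref{lemma2.2} we have $J(u(T^*))\ge d$, whence \eqref{2.12} yields $E(T^*)\ge d$, contradicting $E(T^*)<d$. The degenerate possibilities $u(T^*)=0$ or $u_t(T^*)=0$ allowed by the definition of $W(t)$ are handled by continuity and uniqueness: for instance, if $u(T^*)\equiv 0$ and $u_t(T^*)\equiv 0$, uniqueness for \eqref{1.1} forces $u\equiv 0$, contradicting $(u^0,u^1)\in W(0)$; the other boundary cases are disposed of analogously.

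Once invariance is in hand, a uniform a priori bound follows immediately from the decomposition \eqref{2.11} together with \eqref{2.12}: since $I(u(t))>0$ on $[0,T_{\max})$,
\[
\f12\|u_t(t)\|_2^2+\left(\f12-\f1p\right)\ell\|\nb u(t)\|_2^2\le E(t)\le E(0)<d,
\]
which controls $\|u_t(t)\|_2$ and $\|\nb u(t)\|_2$ uniformly on $[0,T_{\max})$. If $T_{\max}<\infty$, one could then apply Proposition \ref{rtheorem1} with initial data $(u(t_0),u_t(t_0))$ for $t_0<T_{\max}$ sufficiently close to $T_{\max}$ to extend the solution past $T_{\max}$ over a length depending only on the uniform bound, contradicting maximality. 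Therefore $T_{\max}=\infty$, and uniqueness is inherited from the local statement.

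The main obstacle will be the rigorous continuity of $t\mapsto I(u(t))$ at the level of weak solutions and the clean treatment of the unusual non-triviality requirements $u(t)\ne 0$, $u_t(t)\ne 0$ built into the definition \eqref{eq-def-V} of $W(t)$. The former needs the regularity $u\in C([0,T];H_0^1(\Om))$ from Proposition \ref{rtheorem1} combined with the continuous Sobolev embedding $H_0^1(\Om)\hookrightarrow L^p(\Om)$ (valid since $p<\f{2n-2}{n-2}<\f{2n}{n-2}$) so that $\int_\Om k|u(t)|^p\,\rd x$ is continuous in $t$; the latter is essentially a technical footnote handled by backward uniqueness or a direct appeal to Proposition \ref{rtheorem1} restarted at a nearby time.
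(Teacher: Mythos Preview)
The paper does not supply its own proof of this proposition; immediately after the statement it simply refers the reader to \cite{P2}. Your outline is the standard potential-well continuation argument and is the right strategy; presumably it is what \cite{P2} carries out in the closely related setting.

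Two points worth sharpening. First, the continuation step as you phrase it (``restart Proposition~\ref{rtheorem1} at $t_0$'') is not literally available here because of the memory integral $\int_0^t f(t-s)\tri u(s)\,\rd s$: the problem restarted at $t_0$ is not an instance of \eqref{1.1} with new data $(u(t_0),u_t(t_0))$ alone, since the whole history on $[0,t_0]$ enters. The clean way is to invoke the blow-up alternative that comes with the Faedo--Galerkin construction behind Proposition~\ref{rtheorem1}: either $T_{\max}=\infty$ or $\limsup_{t\to T_{\max}^-}\bigl(\|u_t(t)\|_2+\|\nb u(t)\|_2\bigr)=\infty$. Your uniform bound from \eqref{2.11}--\eqref{2.12} then excludes the second alternative directly.

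Second, the non-triviality clauses $u(t)\ne0$, $u_t(t)\ne0$ in the definition of $W(t)$ are, as you already suspect, a cosmetic artifact. If, say, $u(T^*)=0$ while $u_t(T^*)\ne0$, there is genuinely no contradiction to extract: the orbit has momentarily touched the origin in $H_0^1(\Om)$, $I(u(T^*))=0$ trivially, yet the uniform energy bound and global existence are unaffected. In the literature the stable set is usually taken to be $\{I(w)>0,\ J(w)<d\}\cup\{0\}$ (or one drops the $u_t\ne0$ requirement altogether), and the invariance statement is understood in that sense. For the purposes of Proposition~\ref{theorem1} the substantive content---global existence plus the a~priori bound---goes through exactly as you wrote; only the literal membership $(u(t),u_t(t))\in W(t)$ may fail on an at most discrete set of times.
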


Interested readers are referred to \cite{P2} for a detailed proof.


\section{Energy decay estimates}

In the sequel, by $C>0$ we denote generic constants which may change from line to line. In this section, we discuss the energy decay results of the system \eqref{1.1}. The main results of this section are as follows.

\begin{theorem}\label{the1}
Suppose that assumptions {\rm(A1)--(A2)} hold, $(u^0,u^1)\in W(0)$ and
\begin{equation}\label{rr3.1}
E(0)<\min\left\{d,{\f{(p-2)\ell}{2p}\left(\f\ell{2K B_p}\right)^{\f2{p-2}}}\right\},
\end{equation}
{where $d$ was defined by $\eqref{2.10},$ $K:=\|k\|_\infty$ and $B_p$ is the constant introduced in Lemma $\ref{llem1}$ with $r=p$. Then for any fixed $t_1>0,$ there exist a constant $C>0$ such that for any $t\ge t_1,$} the energy $E(t)$ satisfy
\begin{equation}\label{3.1}
E(t)\le\left\{\begin{alignedat}{2}
& C E(0)\exp\left(-C\int_{t_1}^t\xi(s)\,\rd s\right), & \quad & q=1,\\
& {C E(0)\left(1+\int_{t_1}^t\xi^{2q-1}(s)\,\rd s\right)^{-\f1{2q-2}}}, & \quad & 1<q<\f32.
\end{alignedat}\right.
\end{equation}
Moreover, if $1<q<\f32$ and
\begin{equation}\label{xx3.1}
\int_0^\infty\left(1+\int_0^t\xi^{2q-1}(s)\,\rd s\right)^{-\f1{2q-2}}\rd t<\infty,\end{equation}
then we have
\begin{equation}\label{x3.1}
E(t)\le C E(0)\left(1+\int_{t_1}^t\xi^q(s)\,\rd s\right)^{-\f1{q-1}}.
\end{equation}
\end{theorem}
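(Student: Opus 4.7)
The plan is to prove the theorem by a perturbed-energy (Lyapunov) argument in the spirit of \cite{M3}, adapted to the singular damping coefficient $a(x)=|x|^{-\si}$ and the spatially weighted source. Introduce the auxiliary functionals
$$
\Psi(t):=\int_\Om u\,u_t\,\rd x,\qquad\chi(t):=-\int_\Om u_t(t)\int_0^t f(t-s)(u(t)-u(s))\,\rd s\,\rd x,
$$
and set $L(t):=NE(t)+\ve_1\Psi(t)+\ve_2\chi(t)$ with $N\gg1$ and $\ve_1,\ve_2>0$ small. Cauchy--Schwarz together with the Poincar\'e and Hardy inequalities gives the equivalence $L(t)\sim E(t)$ for such parameters.

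The first key reduction is a uniform absorption of the superlinear source. Since $(u^0,u^1)\in W(0)$, Proposition \ref{theorem1} guarantees $(u(t),u_t(t))\in W(t)$ for all $t\ge0$, so \eqref{2.11}--\eqref{2.12} and $I(u(t))>0$ yield $\|\nb u(t)\|_2^2\le\f{2p}{(p-2)\ell}E(0)$. Combined with Lemma \ref{llem1} and the smallness \eqref{rr3.1}, this produces
$$
\int_\Om k|u(t)|^p\,\rd x\le KB_p\|\nb u\|_2^{p-2}\|\nb u\|_2^2\le\f\ell2\|\nb u(t)\|_2^2,
$$
turning the nonlinear source into a controllable perturbation. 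Differentiating $\Psi$ and $\chi$ along \eqref{1.1}, applying Young's inequality together with the Hardy inequality to handle the singular damping term, and inserting the bound above to absorb the nonlinearity, I would arrive at
$$
L'(t)\le-\al_0 E(t)+\al_1(f\circ\nb u)(t)\quad\text{for all }t\ge t_1,
$$
where $t_1>0$ is fixed so that $\int_0^{t_1}f(s)\,\rd s$ exceeds the positive threshold required in the estimate for $\chi'(t)$ (needed to make the coefficient of $\|u_t\|_2^2$ in $\ve_1\Psi'+\ve_2\chi'$ non-positive).

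The two cases in \eqref{3.1} are distinguished by how much dissipation is extracted from $(f\circ\nb u)(t)$ through (A2). For $q=1$, (A2) directly gives $\xi(t)(f\circ\nb u)(t)\le-(f'\circ\nb u)(t)\le-2E'(t)$; multiplying the inequality for $L'(t)$ by the non-increasing $\xi(t)$, rearranging, and invoking a Gronwall argument yields the exponential rate. For $1<q<\f32$, I would insert a H\"older/Jensen step: using (A2) in the form $f^{2q-1}\le-f^{q-1}f'/\xi$ together with the uniform bound $\|\nb u(t)-\nb u(s)\|_2^2\le C E(0)$, one shows
$$
(f\circ\nb u)(t)^{2q-1}\le C\,E(0)^{2q-2}\big((-f'/\xi)\circ\nb u\big)(t)\le C\,E(0)^{2q-2}\,\f{-E'(t)}{\xi(t)},
$$
which combined with the estimate for $L'$ produces an ODE inequality of the form $L^{2q-1}(t)\le-C\,\xi^{-(2q-1)}(t)L'(t)$. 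Integrating from $t_1$ to $t$ delivers the polynomial rate $(1+\int_{t_1}^t\xi^{2q-1}(s)\,\rd s)^{-1/(2q-2)}$.

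For the improved rate \eqref{x3.1} I would bootstrap: hypothesis \eqref{xx3.1} asserts that the decay obtained in the previous step is already integrable, so $\int_0^t\|\nb u(t)-\nb u(s)\|_2^2\,\rd s$ is uniformly bounded in $t$. Re-running the H\"older step with this additional bound permits the exponent $q$ in place of $2q-1$, leading to $L'(t)\le-C\xi^q(t)L^q(t)$ and hence \eqref{x3.1}. The hardest part is the H\"older/Jensen step in the case $1<q<\f32$: choosing exponents so that (A2) and the identity \eqref{2.6} combine to give the sharp decay rate is delicate, and the restriction $q<\f32$ emerges precisely as the admissible range of those exponents.
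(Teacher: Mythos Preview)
Your proposal is correct and follows essentially the same route as the paper: the same two auxiliary functionals, the same Lyapunov combination, the same use of \eqref{rr3.1} to absorb the source into $\tfrac{\ell}{2}\|\nabla u\|_2^2$, Hardy's inequality for the singular damping, the H\"older/Jensen estimate linking $(f\circ\nabla u)$ to $(-E')^{1/(2q-1)}$, and the same bootstrap via integrability of $E$ to upgrade the exponent from $2q-1$ to $q$. The only cosmetic differences are that the paper takes $L=E+\ve_1\phi+\ve_2\psi$ (no large $N$), cites the H\"older step as a lemma from \cite{A2} with $\xi$ placed outside rather than inside the $1/(2q-1)$ power, and passes through an auxiliary $F(t)\sim E(t)$ rather than writing the ODE inequality directly for $L$; none of this changes the argument.
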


\begin{remark}
In fact, the choice of $q$ in inequality \eqref{x3.1} can be extended to $1<q<2$.
\end{remark}

In order to prove Theorem \ref{the1}, we start with introducing a useful lemma.

\begin{lemma}[Hardy-Sobolev inequality, see \cite{Y1,W2}]\label{llem2}
Let $\BR^n=\BR^k\times\BR^{n-k},$ $2\le k\le n$ and $x=(y,z)\in\BR^n=\BR^k\times\BR^{n-k}$. For given $d,\si$ satisfying
$$
1<d<n,\quad0\le\si\le d,\quad\si<k,\quad m=m(\si,n,d):=\f{d(n-\si)}{n-d},
$$
there exists a constant $H=H(\si,n,d,k)>0$ such that for any $u\in W_0^{1,n}(\BR^n),$ there holds
\[
\int_{\BR^n}\f{|u(x)|^m}{|y|^\si}\,\rd x\le H\left(\int_{\BR^n}|\nb u|^d\,\rd x\right)^{\f{n-\si}{n-d}}.
\]
\end{lemma}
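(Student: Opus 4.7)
The plan is to derive this weighted inequality by interpolating, via Hölder's inequality, between the classical Gagliardo-Nirenberg-Sobolev embedding and a partial Hardy inequality on $\BR^n=\BR^k\times\BR^{n-k}$ in which the transverse distance $|y|$ plays the role of the singular weight. The prescribed exponent $m=d(n-\si)/(n-d)$ is the unique value dictated by scaling invariance, and it is precisely what forces the interpolation identity below to close.

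First I would establish the partial Hardy inequality
$$
\int_{\BR^n}\f{|u|^d}{|y|^d}\,\rd x\le C_1\int_{\BR^n}|\nb u|^d\,\rd x
$$
by freezing $z\in\BR^{n-k}$, applying the classical radial Hardy inequality on each cross-section $\BR^k_y$, and integrating in $z$ by Fubini; this is compatible with the paper's regime ($d=2$, $k\ge2$) in which the applications use $\si<k$ with $k\ge d$. Combined with the Sobolev embedding $\|u\|_{d^*}\le C_2\|\nb u\|_d$ for $d^*=nd/(n-d)$, I would split the integrand as $|u|^m/|y|^\si=(|u|/|y|)^\si\cdot|u|^{m-\si}$ and apply Hölder with conjugate exponents $d/\si$ and $d/(d-\si)$, obtaining
$$
\int_{\BR^n}\f{|u|^m}{|y|^\si}\,\rd x\le\left(\int_{\BR^n}\f{|u|^d}{|y|^d}\,\rd x\right)^{\si/d}\left(\int_{\BR^n}|u|^{(m-\si)d/(d-\si)}\,\rd x\right)^{(d-\si)/d}.
$$
A short algebraic check shows that $(m-\si)d/(d-\si)=d^*$ precisely because $m=d(n-\si)/(n-d)$, which is the raison d'être of this particular value of $m$. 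Substituting Hardy on the first factor and Sobolev on the second and summing exponents of $\int|\nb u|^d\,\rd x$ yields
$$
\f\si d+\f{d^*(d-\si)}{d^2}=\f{n-\si}{n-d},
$$
which is exactly the target exponent; the constant $H$ then arises as an explicit product of powers of the Hardy and Sobolev constants. The degenerate endpoints $\si=0$ (pure Sobolev) and $\si=d$ (pure Hardy) are handled by the same split with the convention $x^0=1$, and a standard density argument from $C_c^\infty(\BR^n)$ extends the bound to all of $W_0^{1,d}(\BR^n)$.

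The main obstacle is the partial Hardy inequality in the first step, whose sharp admissibility condition requires the weight exponent to stay strictly below the transverse dimension $k$: the slicing argument degenerates when this bound is attained, since the one-dimensional radial Hardy inequality $\int_0^\infty|v|^d r^{k-1-d}\,\rd r\le C\int_0^\infty|v'|^d r^{k-1}\,\rd r$ fails at $d=k$. This is where the hypothesis $\si<k$ (coupled with $\si\le d$ in the paper's applications) is essential; the rest of the argument is routine interpolation.
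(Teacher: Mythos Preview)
The paper does not prove this lemma at all; it is simply quoted from the references \cite{Y1,W2}. Your interpolation strategy---H\"older between a partial Hardy inequality on the $y$-slices and the Sobolev embedding---is the standard textbook route to inequalities of this type, and the algebraic checks you perform (that $(m-\si)d/(d-\si)=d^*$ and that the exponents add to $(n-\si)/(n-d)$) are correct.

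There is, however, a genuine gap in the range of validity. The partial Hardy inequality you invoke,
\[
\int_{\BR^n}\f{|u|^d}{|y|^d}\,\rd x\le C_1\int_{\BR^n}|\nb u|^d\,\rd x,
\]
obtained by slicing in $z$ and applying the $k$-dimensional Hardy inequality, requires the weight exponent $d$ to satisfy $d<k$; you say this yourself in the last paragraph. But the hypotheses of the lemma only give $\si<k$ and $\si\le d$, which do \emph{not} force $d<k$ (take e.g.\ $k=2$, $d=2.5$, $\si=1.5$, $n=3$). Your closing sentence conflates the condition you actually need ($d<k$) with the one that is assumed ($\si<k$). So as written, the argument proves the lemma only under the extra restriction $d<k$.

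For the paper's purposes this is harmless: the sole application (immediately after the lemma) takes $k=n\ge3$, $d=2$, so $d<k$ holds and your proof goes through. To recover the full stated range one would either appeal directly to the Caffarelli--Kohn--Nirenberg/Maz'ya family, or use a different H\"older splitting that lands on a Hardy weight $|y|^{-a}$ with $a<k$ rather than $|y|^{-d}$.
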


Especially, in the case of $m=d=\si$, the above inequality reduces to the classical Hardy inequality. Meanwhile, if $m=d=\sigma=2$ it follows immediately from Lemma \ref{llem2} that
\[
\left\|\f u{|\cdot|^{\si/2}}\right\|_2^2\le H\|\nb u\|_2^2,\quad\forall\,u\in H_0^1(\BR^n).
\]

Next, we introduce some auxiliary functions and lemmas. We define
\begin{align}
\phi(t) & :=\int_\Om u_t(t)u(t)\,\rd x,\label{3.7}\\
\psi(t) & :=-\int_\Om u_t(t)\int_0^t f(t-s)(u(t)-u(s))\,\rd s\rd x.\label{3.8}
\end{align}
We give estimates for $\phi(t)$ and $\psi(t)$ in the next two lemmas.

\begin{lemma}\label{lemma4}
Let $(u^0,u^1)\in W(0)$ be given and assumption {\rm(A1)} hold. Then the function $\phi(t)$ defined by \eqref{3.7} satisfies the following estimate
\begin{equation}\label{3.9}
\phi'(t)\le\|u_t(t)\|_2^2-\f\ell2\|\nb u(t)\|_2^2+\f{1-\ell}\ell(f\circ\nb u)(t)+\f H\ell\left\|\f{u_t(t)}{|\cdot|^{\si/2}}\right\|_2^2+\int_\Om k|u(t)|^p\,\rd x.
\end{equation}
\end{lemma}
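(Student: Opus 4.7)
The plan is to differentiate $\phi(t)$, substitute from the PDE \eqref{1.1}, and then carefully balance three applications of Young's inequality so that the coefficients line up with the target bound.

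First I would compute
\[
\phi'(t)=\|u_t(t)\|_2^2+\int_\Om u_{tt}(t)u(t)\,\rd x,
\]
then use \eqref{1.1} to replace $u_{tt}$ and integrate by parts on the elliptic/memory terms. This gives, after the identity
\[
\int_0^t f(t-s)\bigl(\nb u(t),\nb u(s)\bigr)\,\rd s=\Bigl(\textstyle\int_0^t f(s)\,\rd s\Bigr)\|\nb u(t)\|_2^2-\int_\Om\nb u(t)\cdot\int_0^t f(t-s)(\nb u(t)-\nb u(s))\,\rd s\,\rd x,
\]
the decomposition
\[
\int_\Om u_{tt}u\,\rd x=-\|\nb u\|_2^2+\Bigl(\textstyle\int_0^t f(s)\,\rd s\Bigr)\|\nb u\|_2^2-R_f(t)-\int_\Om a(x)u_t u\,\rd x+\int_\Om k|u|^p\,\rd x,
\]
where $R_f(t)$ denotes the cross term on the right-hand side above. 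Assumption (A1) lets me bound $\int_0^t f(s)\,\rd s\le1-\ell$, so the elliptic contribution already reads $-\ell\|\nb u\|_2^2$.

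Next I would estimate the two remaining ``bad'' terms. For $R_f(t)$, applying Cauchy--Schwarz twice (once in space and once in the convolution variable, using $\int_0^t f\le1-\ell$) yields
\[
|R_f(t)|\le\|\nb u(t)\|_2\sqrt{(1-\ell)(f\circ\nb u)(t)}\le\f\ell4\|\nb u(t)\|_2^2+\f{1-\ell}\ell(f\circ\nb u)(t)
\]
after Young's inequality with parameter $\ell/4$; this is precisely what is needed to produce the coefficient $(1-\ell)/\ell$ in front of $(f\circ\nb u)(t)$. For the damping term, I write $a(x)u_tu=(u_t/|x|^{\si/2})(u/|x|^{\si/2})$, apply Young with parameter $\ell/(4H)$, and then invoke the Hardy--Sobolev inequality from Lemma \ref{llem2} (with $m=d=2$) applied to the zero extension of $u$, obtaining
\[
\Bigl|\int_\Om a(x)u_tu\,\rd x\Bigr|\le\f\ell4\|\nb u\|_2^2+\f H\ell\Bigl\|\f{u_t}{|\cdot|^{\si/2}}\Bigr\|_2^2.
\]

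Collecting everything, the coefficient of $\|\nb u(t)\|_2^2$ becomes $-\ell+\ell/4+\ell/4=-\ell/2$, exactly matching \eqref{3.9}. The nonlinear source term $\int_\Om k|u|^p\,\rd x$ passes through untouched. I do not expect a real obstacle: the only delicate point is to tune the two Young's parameters simultaneously so that both $(1-\ell)/\ell$ and $H/\ell$ appear with the correct prefactors while leaving precisely $\ell/2$ of negative $\|\nb u\|_2^2$ to absorb the losses. The global existence statement of Proposition \ref{theorem1} ensures that the differentiation and integration by parts above are legitimate on the interval of existence.
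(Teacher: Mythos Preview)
Your proposal is correct and follows essentially the same route as the paper: differentiate $\phi$, substitute from \eqref{1.1}, rewrite the memory term via the add-and-subtract identity, bound the cross term by Cauchy--Schwarz (in space and in the $s$-variable, using $\int_0^t f\le1-\ell$) followed by Young with parameter $\ell/4$, and bound the damping term via Young plus Hardy--Sobolev; the two $\ell/4$ losses then combine with $-\ell$ to give the coefficient $-\ell/2$. The only cosmetic difference is that the paper writes the cross term with the opposite sign convention and does not name it.
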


\begin{proof}
By the governing equation and the homogeneous boundary condition in \eqref{1.1}, we utilize the divergence theorem to calculate
\begin{align}
\phi'(t) & =\int_\Om\left\{u_{tt}(t)u(t)+u_t^2(t)\right\}\rd x\nonumber\\
& =\|u_t(t)\|_2^2-\|\nb u(t)\|_2^2+\int_\Om\nb u(t)\cdot\int_0^t f(t-s)\nb u(s)\,\rd s\rd x-\int_\Om a u_t(t)u(t)\,\rd x\nonumber\\
& \quad\;\!+\int_\Om k|u(t)|^p\,\rd x\nonumber\\
& \le\|u_t(t)\|_2^2-\ell\|\nb u(t)\|_2^2+\int_\Om\nb u(t)\cdot\int_0^t f(t-s)(\nb u(s)-\nb u(t))\,\rd s\rd x\nonumber\\
& \quad\;\!-\int_\Om a u_t(t)u(t)\,\rd x+\int_\Om k|u(t)|^p\,\rd x.\label{3.10}
\end{align}
Applying {the Cauchy-Schwarz and H\"older's inequalities} to deal with the third term in \eqref{3.10} yields
\begin{align}
& \quad\;\!\int_\Om\nb u(t)\cdot\int_0^t f(t-s)(\nb u(s)-\nb u(t))\,\rd s\rd x\nonumber\\
& \le\f\ell4\|\nb u(t)\|_2^2+\f1\ell\int_\Om\left|\int_0^t\sqrt{f(t-s)}\sqrt{f(t-s)}\,(\nb u(s)-\nb u(t))\right|^2\rd s\rd x\nonumber\\
& \le\f\ell4\|\nb u(t)\|_2^2+\f1\ell\int_0^t{f(t-s)}\,\rd s\int_\Om\int_0^t f(t-s)|\nb u(s)-\nb u(t)|^2\,\rd s\rd x\nonumber\\
& \le\f\ell4\|\nb u(t)\|_2^2+\f{1-\ell}\ell(f\circ\nb u)(t).\label{eq-keyest}
\end{align}
In the same manner and using Lemma \ref{llem2}, we obtain
\begin{equation}\label{3.12}
\int_\Om a u_t(t)u(t)\,\rd x=\int_\Om\f{u_t(t)u(t)}{|x|^\si}\,\rd x\le\f\ell4\|\nb u(t)\|_2^2+\f H\ell\left\|\f{u_t(t)}{|\cdot|^{\si/2}}\right\|_2^2.
\end{equation}
Combining \eqref{3.10}--\eqref{3.12}, we arrive at \eqref{3.9}.
\end{proof}

\begin{lemma}\label{lemma5}
Let $(u^0,u^1)\in W(0)$ be given and assumption {\rm(A1)} hold. Then the function $\psi(t)$ defined by \eqref{3.8} satisfies the estimate
\begin{align*}
\psi'(t) & \le\de\|\nb u(t)\|_2^2+\de\left\|\f{u_t(t)}{|\cdot|^{\si/2}}\right\|_2^2-\left(\int_0^t f(s)\,\rd s-\de\right)\|u_t(t)\|_2^2\\
& \quad\;\!+\de\int_\Om k|u(t)|^p\,\rd x-\f{B_2f(0)}{4\de}(f'\circ\nb u)(t)+C(\de)(f\circ\nb u)(t),
\end{align*}
where $B_2>0$ is the optimal constant in Lemma $\ref{llem1}$ with $r=2,$ $\de>0$ is an arbitrary constant and $C(\de)>0$ is a constant depending on $\de$.
\end{lemma}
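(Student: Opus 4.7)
The plan is to differentiate $\psi(t)$ in \eqref{3.8} directly by the Leibniz rule, substitute the governing equation in \eqref{1.1} to eliminate $u_{tt}$, perform integration by parts on the Laplacian-type terms, and then bound each of the resulting integrals by Young's inequality with a parameter $\delta$. The weighted Hardy-Sobolev inequality from Lemma \ref{llem2} will be reserved for the singular damping contribution, while Lemma \ref{llem1} (with $r=2$) will be used repeatedly to convert $\|u(t)-u(s)\|_2^2$ into $\|\nb u(t)-\nb u(s)\|_2^2$, so that $(f\circ\nb u)(t)$ appears in the final bound.

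First I would compute
$$
\psi'(t)=-\int_\Om u_{tt}(t)\int_0^t f(t-s)(u(t)-u(s))\,\rd s\,\rd x-\int_\Om u_t(t)\int_0^t f'(t-s)(u(t)-u(s))\,\rd s\,\rd x-\|u_t(t)\|_2^2\int_0^t f(s)\,\rd s,
$$
where the boundary contribution from the moving upper limit vanishes since $u(t)-u(t)=0$. The last summand already supplies the $-(\int_0^t f-\de)\|u_t\|_2^2$ piece after absorbing a $\de\|u_t(t)\|_2^2$ borrowed from the $f'$ estimate described below. Using \eqref{1.1} I would replace $u_{tt}$ by $\tri u-\int_0^t f(t-s)\tri u(s)\,\rd s-au_t+k|u|^{p-2}u$ and integrate by parts on the two Laplacian pieces.

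Each of the resulting four integrals is then handled by a Young-type splitting. The term $\nb u(t)\cdot\int f(t-s)(\nb u(t)-\nb u(s))\,\rd s$ is treated exactly as in \eqref{eq-keyest}, giving $\de\|\nb u(t)\|_2^2+C(\de)(f\circ\nb u)(t)$; the double-convolution piece $\int f(t-\tau)\nb u(\tau)\,\rd\tau\cdot\int f(t-s)(\nb u(t)-\nb u(s))\,\rd s$ is handled analogously after writing $\nb u(\tau)=\nb u(t)-(\nb u(t)-\nb u(\tau))$. The damping contribution is split as $(u_t(t)/|x|^{\si/2})\cdot(|x|^{-\si/2}\int f(t-s)(u(t)-u(s))\,\rd s)$, producing $\de\|u_t(t)/|\cdot|^{\si/2}\|_2^2$ and a remainder in which Lemma \ref{llem2} (with $m=d=\si=2$) turns $\int_\Om|x|^{-\si}(u(t)-u(s))^2\,\rd x$ into a multiple of $\|\nb u(t)-\nb u(s)\|_2^2$. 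The source term is treated by Young's inequality with conjugate exponents $p/(p-1)$ and $p$, yielding $\de\int_\Om k|u(t)|^p\,\rd x$ together with a remainder bounded through the Sobolev embedding $H_0^1\hookrightarrow L^p$ (valid since $p<2n/(n-2)$) and Lemma \ref{llem1}. Finally, the $f'$ term is bounded by Young's inequality with parameter $\de$; the factorisation $f'(t-s)=-\sqrt{-f'(t-s)}\cdot\sqrt{-f'(t-s)}$, the identity $\int_0^t(-f'(t-s))\,\rd s=f(0)-f(t)\le f(0)$, and Lemma \ref{llem1} together produce exactly $-(B_2f(0)/(4\de))(f'\circ\nb u)(t)$. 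Summing all contributions and relabelling the $\de$ constants yields the stated inequality.

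The main obstacle is the source-term estimate: naive Young's inequality produces a $p/2$-power of $(f\circ\nb u)(t)$, which must be linearised into $(f\circ\nb u)(t)$ itself. To achieve this I would invoke the uniform energy bound $E(t)\le E(0)$ from \eqref{xx2.6}; together with $(u^0,u^1)\in W(0)$ and the identity \eqref{2.11}, this forces $\|\nb u(t)\|_2^2$ and hence $(f\circ\nb u)(t)$ to remain uniformly bounded in $t$, so that since $p>2$ the $p/2$-power is dominated by the first power at the cost of enlarging $C(\de)$. All remaining steps are routine applications of Cauchy-Schwarz and Young's inequality.
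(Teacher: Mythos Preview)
Your proposal is correct and follows essentially the same route as the paper: the same differentiation of $\psi$, the same six pieces, the same use of Cauchy--Schwarz/Young with parameter $\de$ on each, the Hardy--Sobolev inequality for the singular damping term, and the Poincar\'e constant $B_2$ together with $\int_0^t(-f')\le f(0)$ for the $f'$ contribution. The only cosmetic difference is in linearising the source-term remainder: the paper applies Jensen in $s$ first and then bounds $\|\nb u(t)-\nb u(s)\|_2^{p-2}$ by the potential-well estimate $\|\nb u\|_2^2\le\tfrac{2pd}{(p-2)\ell}$, whereas you pass through $(f\circ\nb u)(t)^{p/2}$ and linearise via the uniform bound on $(f\circ\nb u)(t)$; both rest on the same energy inequality \eqref{2.12} combined with $I(u(t))>0$ and yield the same $C(\de)(f\circ\nb u)(t)$.
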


\begin{proof}
Similarly to the proof of Lemma \ref{lemma4}, we differentiate $\psi(t)$ by its definition and exploit problem \eqref{1.1} to calculate
\begin{align*}
\psi'(t) & =-\int_\Om u_{tt}(t)\int_0^t f(t-s)(u(t)-u(s))\,\rd s\rd x\\
& \quad\;\!-\int_\Om u_t(t)\int_0^t f'(t-s)(u(t)-u(s))\,\rd s\rd x-\int_0^t f(s)\,\rd s\int_\Om|u_t(t)|^2\,\rd x\\
& =\int_\Om\left\{-\tri u(t)+\int_0^t f(t-s)\tri u(s)\,\rd s+a u_t(t)-k|u(t)|^{p-2}u(t)\right\}\\
& \quad\;\!\times\int_0^t f(t-s)(u(t)-u(s))\,\rd s\rd x+I_5(t)+I_6(t)=\sum_{i=1}^6I_i(t),
\end{align*}
where
\begin{align*}
I_1(t) & :=\left(1-\int_0^t f(s)\,\rd s\right)\int_\Om\nb u(t)\cdot\int_0^t f(t-s)(\nb u(t)-\nb u(s))\,\rd s\rd x,\\
I_2(t) & :=\int_\Om\left|\int_0^t f(t-s)(\nb u(t)-\nb u(s))\,\rd s\right|^2\rd x,\\
I_3(t) & :=\int_\Om a u_t(t)\int_0^t f(t-s)(u(t)- u(s))\,\rd s\rd x,\\
I_4(t) & :=-\int_\Om k|u(t)|^{p-2}u(t)\int_0^t f(t-s)(u(t)- u(s))\,\rd s\rd x,\\
I_5(t) & :=-\int_\Om u_t(t)\int_0^t f'(t-s)(u(t)-u(s))\,\rd s\rd x,\\
I_6(t) & :=-\int_0^t f(s)\,\rd s\int_\Om|u_t(t)|^2\,\rd x.
\end{align*}
Now we estimate each of the above terms. First, similarly to the argument for \eqref{eq-keyest}, we estimate $I_2$ as
\begin{align}
|I_2(t)| & =\int_\Om\left|\int_0^t\sqrt{f(t-s)}\sqrt{f(t-s)}\,(\nb u(t)-\nb u(s))\,\rd s\right|^2\rd x\nonumber\\
& \le\int_0^t f(s)\,\rd s\int_\Om\int_0^t f(t-s)|\nb u(t)-\nb u(s)|^2\,\rd s\rd x\le(1-\ell)(f\circ\nb u)(t).\label{in1}
\end{align}
For $I_1$, recalling assumption (A1) for $f$, we combine the Cauchy-Schwarz inequality, Cauchy's inequality with a constant $\de>0$ and \eqref{in1} to derive
\begin{align*}
|I_1(t)| & \le\int_\Om\left|\nb u(t)\cdot\int_0^t f(t-s)(\nb u(t)-\nb u(s))\,\rd s\right|\rd x\\
& \le\int_\Om|\nb u(t)|\left|\int_0^t f(t-s)(\nb u(t)-\nb u(s))\,\rd s\right|\rd x\\
& \le\de\int_\Om|\nb u(t)|^2\,\rd x+\f1{4\de}\int_\Om\left|\int_0^t f(t-s)(\nb u(t)-\nb u(s))\,\rd s\right|^2\,\rd x\\
& \le\de\|\nb u(t)\|_2^2+\f1{4\de}(1-\ell)(f\circ\nb u)(t).
\end{align*}
Analogously, we apply the Hardy-Sobolev inequality in Lemma \ref{llem2} and \eqref{in1} to estimate
\[
|I_3(t)|\le\de\left\|\f{u_t(t)}{|\cdot|^{\si/2}}\right\|_2^2+\f{H(1-\ell)}{4\de}(f\circ\nb u)(t).
\]
In the same manner, {we estimate $I_5$ as
\begin{align*}
|I_5(t)| & \le\de\int_\Om u_t(t)^2\,\rd x+\f1{4\de}\int_\Om\left(\int_0^t f'(t-s)(u(t)-u(s))\,\rd s\right)^2\rd x\\
& \le\de\|u_t(t)\|_2^2+\f{B_2}{4\de}\int_\Om\left|\int_0^t f'(t-s)(\nb u(t)-\nb u(s))\,\rd s\right|^2\rd x\\
& \le\de\|u_t(t)\|_2^2+\f{B_2}{4\de}\int_0^t f'(t-s)\,\rd s\int_\Om\int_0^t f'(t-s)|\nb u(t)-\nb u(s)|^2\,\rd s\rd x\\
& \le\de\|u_t(t)\|_2^2-\f{B_2f(0)}{4\de}(f'\circ\nb u)(t),
\end{align*}
where we used Lemma $\ref{llem1}$ with $r=2$ and assumption (A1) for $f$. Finally for $I_4$, we utilize Young's inequality with $\de$ the definition of $k$ and \eqref{2.11}--\eqref{2.12} to deduce
\begin{align*}
|I_4(t)| & \le\de\int_\Om k|u(t)|^p\,\rd x+c(\de)\int_\Om k\left(\int_0^t f(t-s)|u(t)-u(s)|\,\rd s\right)^p\rd x\\
& \le\de\int_\Om k|u(t)|^p\,\rd x+(1-\ell)^{p-1}c(\de)\int_\Om k\int_0^t f(t-s)|  u(t)- u(s)|^p\,\rd s\rd x\\
& \le\de\int_\Om k|u(t)|^p\,\rd x+K(1-\ell)^{p-1}c(\de)\int_0^t f(t-s)\|\nb u(t)-\nb u(s)\|_2^p\,\rd s\\
& \le\de\int_\Om k|u(t)|^p\,\rd x+K(1-\ell)^{p-1}c(\de)\left(\f{2pd}{(p-2)\ell}\right)^{\f{p-2}2}(f\circ\nb u)(t),
\end{align*}
where $c(\de)>0$ is a constant depending on $\de$. Here we dealt with $\|\nb u(t)-\nb u(s)\|_2^p$ as}
\begin{align*}
\|\nb u(t)-\nb u(s)\|_2^p & =\|\nb u(t)-\nb u (s)\|_2^{p-2}\|\nb u(t)-\nb u(s)\|_2^2\\
& \le2\left(\f{2pd}{(p-2)\ell}\right)^{\f{p-2}2}\|\nb u(t)-\nb u(s)\|_2^2
\end{align*}
and recall $K=\|k\|_\infty$. Summing up all the above estimates and collecting all the coefficients in front of $(f\circ\nb u)(t)$ as a single constant $C(\de)>0$, we complete the proof of Lemma \ref{lemma5}.
\end{proof}

\begin{lemma}\label{lemma6}
{Let the same assumptions of Theorem $\ref{the1}$ be satisfied. Then for any fixed $t_1>0,$ there exist constants $\ve_1,\ve_2,\al,C>0$ such that for any $t\ge t_1,$} the function
\[
L(t):=E(t)+\ve_1\phi(t)+\ve_2\psi(t)
\]
satisfies
\begin{align}
L(t) & \sim E(t),\label{3.25}\\
L'(t) & \le-\al E(t)+C(f\circ\nb u)(t)\label{3.26}
\end{align}
for all $t\ge t_1$.
\end{lemma}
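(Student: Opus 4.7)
The plan is to combine the energy identity \eqref{2.6} with the pointwise bounds of Lemmas \ref{lemma4}--\ref{lemma5}, balancing the small parameters against the smallness assumption \eqref{rr3.1}. Two things must happen: (i) the perturbations $\ve_1\phi$ and $\ve_2\psi$ must be $L^\infty$-dominated by $E(t)$, and (ii) the positive terms they generate upon differentiation must be absorbed by the dissipative terms in $E'(t)$.

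For the equivalence $L(t)\sim E(t)$, I would start from $(u,u_t)\in W(t)$ guaranteed by Proposition \ref{theorem1}. Using \eqref{2.11}--\eqref{2.12} with $I(u(t))>0$ I extract the key bound $E(t)\ge\f12\|u_t(t)\|_2^2+\f{(p-2)\ell}{2p}\|\nb u(t)\|_2^2+\f12(f\circ\nb u)(t)$, so $E(t)$ controls $\|u_t\|_2^2$, $\|\nb u\|_2^2$ and $(f\circ\nb u)(t)$ simultaneously. On the other hand, Cauchy--Schwarz together with Lemma \ref{llem1} ($r=2$) yields $|\phi(t)|\lesssim\|u_t\|_2^2+\|\nb u\|_2^2$, and a double Cauchy--Schwarz (first inside the memory integral, then in $x$) gives $|\psi(t)|\lesssim\|u_t\|_2^2+(1-\ell)B_2(f\circ\nb u)(t)$, exactly as in the estimate of $I_2$ in Lemma \ref{lemma5}. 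Taking $\ve_1,\ve_2$ small then yields \eqref{3.25}.

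For the derivative estimate, I would write $L'(t)=E'(t)+\ve_1\phi'(t)+\ve_2\psi'(t)$ and substitute \eqref{2.6}, \eqref{3.9} and the estimate of Lemma \ref{lemma5}, collecting the coefficients of $\|u_t\|_2^2$, $\|\nb u\|_2^2$, $\|u_t/|\cdot|^{\si/2}\|_2^2$, $\int_\Om k|u|^p\rd x$, $(f\circ\nb u)(t)$ and $(f'\circ\nb u)(t)$. The crucial step is to exploit the \emph{strict} smallness \eqref{rr3.1}: as already noted (cf.\ \eqref{rr3.1} $\Rightarrow KB_p(2pE(0)/((p-2)\ell))^{(p-2)/2}<\ell/2$), there exists $\eta>0$ such that
\begin{equation*}
\int_\Om k|u(t)|^p\rd x\le\Bigl(\f\ell2-\eta\Bigr)\|\nb u(t)\|_2^2
\qquad\text{for all }t\ge0.
\end{equation*}
Substituting this into the $\int_\Om k|u|^p\rd x$ terms from $\phi'$ and $\psi'$ converts the troublesome source contribution into a genuinely negative multiple of $\|\nb u\|_2^2$ (with coefficient $-\ve_1\eta$ after combining with $-\ve_1\ell/2$). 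For $t\ge t_1$, the coefficient of $\|u_t\|_2^2$ from $\psi'$ becomes $-\ve_2(f_0-\de)$ with $f_0:=\int_0^{t_1}f(s)\,\rd s>0$, which is the only place the restriction $t\ge t_1$ is used.

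It remains to pick the constants in the right order; this is the delicate but routine part. I would first fix $\de>0$ so small that $(1+\ell/2)\de<\eta(f_0-\de)/2$, then take $\ve_2\le2\de/(B_2f(0))$ so that the coefficient of $(f'\circ\nb u)(t)$ stays nonnegative (and $f'\le0$ gives a nonpositive contribution), and finally select $\ve_1$ in the nonempty interval $(\ve_2\de(1+\ell/2)/\eta,\ \ve_2(f_0-\de))$, shrinking both further if needed so that $\ve_1H/\ell+\ve_2\de<1$ to beat the $-\|u_t/|\cdot|^{\si/2}\|_2^2$ term. This produces
\begin{equation*}
L'(t)\le-c_1\|u_t(t)\|_2^2-c_2\|\nb u(t)\|_2^2-c_3\Bigl\|\f{u_t(t)}{|\cdot|^{\si/2}}\Bigr\|_2^2+C(f\circ\nb u)(t),\quad t\ge t_1,
\end{equation*}
with strictly positive $c_1,c_2,c_3$. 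Choosing $\al>0$ smaller than $\min(2c_1,2c_2)$ allows me to trade this pointwise dissipation against $-\al E(t)$ term by term (the $\int_\Om k|u|^p\rd x$ contribution in $-\al E(t)$ carries the favorable sign), yielding \eqref{3.26}. The main obstacle is precisely the simultaneous tuning of $\de,\ve_1,\ve_2,\al$: without the strict inequality in \eqref{rr3.1} the gap $\eta$ degenerates and no negative $\|\nb u\|_2^2$-coefficient survives, which is why the smallness of $E(0)$ is indispensable.
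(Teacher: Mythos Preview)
Your proposal is correct and follows essentially the same route as the paper: both arguments combine \eqref{2.6} with Lemmas \ref{lemma4}--\ref{lemma5}, invoke the smallness \eqref{rr3.1} to bound $\int_\Om k|u|^p\,\rd x$ by $(\ell/2-\eta)\|\nb u\|_2^2$ (the paper writes this as $\xi_1\|\nb u\|_2^2$ with $\xi_1<\ell/2$), and then tune $\de,\ve_2,\ve_1,\al$ in the same order to make every residual coefficient nonpositive. The only cosmetic difference is that the paper adds and subtracts $\al E(t)$ at the outset and derives the constraints \eqref{eq-pos1}--\eqref{eq-pos2} directly, whereas you first secure strictly negative coefficients $-c_1,-c_2,-c_3$ and pick $\al$ afterward; the resulting parameter windows coincide.
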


\begin{proof}
Using Young's and H\"older's inequalities, one can easily get \eqref{3.25} for any sufficiently small $\ve_1,\ve_2>0$ (see \cite{A2} for a detailed proof).

{For \eqref{3.26}, we fix any $t_1>0$ and let $\ve_1,\ve_2,\al>0$ be constants to be selected later. By \eqref{2.5}--\eqref{2.6} and the positivity of $f$, first it is readily seen that
\begin{align*}
E(t) & \le\f12\left(\|u_t(t)\|_2^2+\|\nb u(t)\|_2^2+(f\circ\nb u)(t)\right)-\f1p\int_\Om k|u(t)|^p\,\rd x,\\
E'(t) & \le\f12(f'\circ\nb u)(t)-\left\|\f{u_t(t)}{|\cdot|^{\si/2}}\right\|_2^2.
\end{align*}
Then we combine the above estimates with Lemmas \ref{lemma4}--\ref{lemma5} to deduce for $t\ge t_1$ that
\begin{align*}
L'(t) & \le-\al E(t)+E'(t)+\al E(t)+\ve_1\phi'(t)+\ve_2\psi'(t)\\
& \le-\al E(t)+\left(\f12-\f{\ve_2B_2f(0)}{4\de}\right)(f'\circ\nb u)(t)-\left(1-\f{\ve_1H}\ell-\ve_2\de\right)\left\|\f{u_t(t)}{|\cdot|^{\si/2}}\right\|_2^2\\
& \quad\;\!-\left(\ve_2(f_1-\de)-\ve_1-\f\al2\right)\|u_t(t)\|_2^2-\left(\f{\ve_1\ell}2-\ve_2\de-\f\al2\right)\|\nb u(t)\|_2^2\\
& \quad\;\!+\left(\f\al2+\f{\ve_1(1-\ell)}\ell+\ve_2C(\de)\right)(f\circ\nb u)(t)+\left(\ve_1+\ve_2\de-\f\al p\right)\int_\Om k|u(t)|^p\,\rd x,
\end{align*}
where $f_1:=\int_0^{t_1}f(s)\,\rd s$. For the last term above, we utilize Lemma \ref{llem1} with $r=p$} and \eqref{2.12} to derive
\begin{align*}
\int_\Om k|u(t)|^p\,\rd x\le K\|u(t)\|_p^p\le K B_p\|\nb u(t)\|_2^p\le K B_p\left(\f{2p E(0)}{(p-2)\ell}\right)^{\f{p-2}2}\|\nb u(t)\|_2^2=\xi_1\|\nb u(t)\|_2^2,
\end{align*}
where
\[
\xi_1:=K B_p\left(\f{2p E(0)}{(p-2)\ell}\right)^{\f{p-2}2}.
\]
Thanks to the key assumption \eqref{rr3.1}, we note that $\ell/2>\xi_1$. Then we can dominate $L'(t)$ as
\begin{align}
L'(t) & \le-\al E(t)+\left(\f\al2+\f{\ve_1(1-\ell)}\ell+\ve_2C(\de)\right)(f\circ\nb u)(t)\nonumber\\
& \quad\;\!+\left(\f12-\f{\ve_2B_2f(0)}{4\de}\right)(f'\circ\nb u)(t)-\left(1-\f{\ve_1H}\ell-\ve_2\de\right)\left\|\f{u_t(t)}{|\cdot|^{\si/2}}\right\|_2^2\nonumber\\
& \quad\;\!-\left(\ve_2(f_1-\de)-\ve_1-\f\al2\right)\|u_t(t)\|_2^2-\left\{\ve_1\left(\f\ell2-\xi_1\right)-\ve_2\de(1+\xi_1)-\f\al2\right\}\|\nb u(t)\|_2^2.\label{est-L'}
\end{align}

In comparison with the desired inequality \eqref{3.26}, it suffices to choose constants $\de,\ve_1,\ve_2,\al>0$ suitably such that the last 4 terms on the right-hand side of \eqref{est-L'} are negative, that is,
\begin{gather}
\f12-\f{\ve_2B_2f(0)}{4\de}\ge0,\quad1-\f{\ve_1H}\ell-\ve_2\de\ge0,\label{eq-pos1}\\
\ve_2(f_1-\de)-\ve_1-\f\al2\ge0,\quad\ve_1\left(\f\ell2-\xi_1\right)-\ve_2\de(1+\xi_1)-\f\al2\ge0.\label{eq-pos2}
\end{gather}
Owing to $\ell/2>\xi_1$, we can choose sufficiently small $\de>0$ such that
\[
f_1-\de>\f{\de(1+\xi_1)}{\ell/2-\xi_1}>0.
\]
Then we can restrict $\ve_1,\ve_2>0$ as
\[
\ve_2(f_1-\de)>\ve_1>\ve_2\f{\de(1+\xi_1)}{\ell/2-\xi_1}.
\]
Next, we select first $\ve_2>0$ and then $\ve_1>0$ as
\[
\ve_2<\min\left\{\f{2\de}{B_2f(0)},\f1\de\right\},\quad\ve_1\le\f{\ell(1-\ve_2\de)}H,
\]
such that \eqref{eq-pos1} is achieved. Finally, we can achieve \eqref{eq-pos2} by choosing sufficiently small $\al>0$. Hence, we complete the proof of \eqref{3.26} by collecting the coefficient of $(f\circ\nb u)(t)$ in \eqref{est-L'} into a single constant $C>0$.
\end{proof}

{In order to treat $(f\circ\nb u)(t)$, we need an additional lemma.}

\begin{lemma}[see \cite{A2}]\label{lemm1}
Under assumptions {\rm(A1)--(A2),} the solution $u$ to the problem \eqref{1.1} satisfies
\[
\xi(t)(f\circ\nb u)(t)\le C(-E'(t))^{\f1{2q-1}},\quad t>0.
\]
\end{lemma}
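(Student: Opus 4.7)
The plan is to combine H\"older's inequality with the ODE in (A2) and the dissipation identity \eqref{2.6}, after securing a uniform a priori bound on the quantity $h(s):=\|\nb u(t)-\nb u(s)\|_2^2$. I would first note that since $(u^0,u^1)\in W(0)$, Proposition \ref{theorem1} ensures $(u(t),u_t(t))\in W(t)$ for all $t\ge0$, and so \eqref{2.11}--\eqref{2.12} together with \eqref{xx2.6} furnish a uniform bound $\|\nb u(t)\|_2^2\le C_0$, hence $h(s)\le M$ with $M$ independent of $t$ and $s\in[0,t]$. In the case $q=1$ the claim is then immediate: monotonicity of $\xi$ gives $\xi(t)\le\xi(t-s)$ on $[0,t]$, so (A2) yields the pointwise inequality $\xi(t)f(t-s)\le-f'(t-s)$; multiplying by $h(s)$, integrating in $s$ and invoking \eqref{2.6} produces $\xi(t)(f\circ\nb u)(t)\le-(f'\circ\nb u)(t)\le-2E'(t)$, which matches the statement since $1/(2q-1)=1$.

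For the main case $1<q<3/2$ I would apply H\"older's inequality with conjugate exponents $2q-1$ and $(2q-1)/(2q-2)$ to the factorization
\begin{align*}
\xi(t)f(t-s)h(s)
&=\bigl[\xi(t)^{1/(2q-1)}f(t-s)^{q/(2q-1)}h(s)^{1/(2q-1)}\bigr]\\
&\quad\times\bigl[\xi(t)^{(2q-2)/(2q-1)}f(t-s)^{(q-1)/(2q-1)}h(s)^{(2q-2)/(2q-1)}\bigr].
\end{align*}
Using the algebraic identity $(q-1)/(2q-2)=\f12$, raising each bracketed factor to its conjugate exponent collapses the two expressions to $\xi(t)f^q(t-s)h(s)$ and $\xi(t)f(t-s)^{1/2}h(s)$, respectively. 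The high-exponent factor is then bounded exactly as in the $q=1$ case, via $\xi(t)f^q(t-s)\le-f'(t-s)$ and \eqref{2.6}, producing the contribution $[-2E'(t)]^{1/(2q-1)}$. The low-exponent factor I would dominate by $M\xi(0)\int_0^\infty f(\tau)^{1/2}\,\rd\tau$, and absorbing all coefficients into a single $C>0$ yields the claim.

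The main obstacle is precisely justifying $\int_0^\infty f(\tau)^{1/2}\,\rd\tau<\infty$, which is the step that truly uses the upper threshold $q<3/2$ in (A2). Separating variables in $-f'\ge\xi f^q$ gives $f^{1-q}(t)-f^{1-q}(0)\ge(q-1)\int_0^t\xi(s)\,\rd s$, so $f(t)\le C\bigl(1+\int_0^t\xi(s)\,\rd s\bigr)^{-1/(q-1)}$; consequently $f(t)^{1/2}$ decays with exponent $1/(2(q-1))$, and integrability at infinity amounts to $1/(2(q-1))>1$, that is, exactly $q<3/2$. This delicate decay analysis of $f$ under (A2), carried out in \cite{A2}, is the technical core; once it is in hand, the H\"older splitting above assembles the remaining pieces routinely and closes the proof.
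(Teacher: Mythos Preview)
The paper does not supply its own proof of this lemma; it simply cites \cite{A2}. Your H\"older-splitting strategy with conjugate exponents $2q-1$ and $(2q-1)/(2q-2)$ is exactly the route taken in \cite{A2,M3}, and the overall architecture of your argument is correct.

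There is, however, a genuine gap in how you dispose of the ``low-exponent'' factor. You bound
\[
\int_0^t\xi(t)f(t-s)^{1/2}h(s)\,\rd s\le M\xi(0)\int_0^\infty f(\tau)^{1/2}\,\rd\tau,
\]
and then argue that $\int_0^\infty f^{1/2}<\infty$ via the pointwise bound $f(t)\le C(1+\int_0^t\xi)^{-1/(q-1)}$. Your conclusion ``integrability at infinity amounts to $1/(2(q-1))>1$'' tacitly assumes $\int_0^t\xi$ grows linearly, which (A2) does not guarantee. In fact one can build $f$ satisfying (A1)--(A2) with some $q<3/2$ yet $\int_0^\infty f^{1/2}=\infty$ (take $f(t)\sim(t\log^2 t)^{-1}$ and $\xi$ the running minimum of $-f'/f^q$).

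The fix is not to discard the $\xi$-weight. Using the monotonicity of $\xi$ once more,
\[
\xi(t)\int_0^t f(\tau)^{1/2}\,\rd\tau\le\int_0^t\xi(\tau)f(\tau)^{1/2}\,\rd\tau\le\int_0^\infty\xi(\tau)f(\tau)^{1/2}\,\rd\tau,
\]
and {\em this} integral is finite under (A2) alone: since $\xi f^{1/2}=\xi f^q\cdot f^{1/2-q}\le(-f')f^{1/2-q}$, one gets
\[
\int_0^\infty\xi(\tau)f(\tau)^{1/2}\,\rd\tau\le\int_0^\infty(-f'(\tau))f(\tau)^{1/2-q}\,\rd\tau=\frac{f(0)^{3/2-q}}{3/2-q}<\infty,
\]
precisely when $q<3/2$. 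This is where the threshold truly enters, and it is the argument used in \cite{A2,M3}. With this correction your proof goes through.
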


Now, we are in the position to tackle Theorem \ref{the1}.

\begin{proof}[\bf Proof of Theorem \ref{the1}]
Multiplying both sides of \eqref{3.26} by $\xi(t)$ and applying Lemma \ref{lemm1}, we see
\[
\xi(t)L'(t)\le -\al \xi(t)E(t)+C(-E'(t))^{\f1{2q-1}}.
\]
Further multiplying both sides of of the above inequality by $(\xi(t)E(t))^\ga$ with $\ga:=2q-2$, we employ Young's inequality to estimate
\begin{align}
\xi^{\ga+1}(t)E^\ga(t)L'(t) & \le-\al(\xi(t)E(t))^{\ga+1}+C(\xi(t)E(t))^\ga(-E'(t))^{\f1{\ga+1}}\nonumber\\
& \le-\f\al2(\xi(t)E(t))^{\ga+1}-CE'(t).\label{3.31}
\end{align}
Set $F(t):=(\xi(t)E(t))^{\ga+1}L(t)+C E(t)$. Using the non-increasing of $\xi(t),E(t)$and \eqref{3.25}, we have the equivalence $F(t)\sim E(t)$. Then we differentiate $F(t)$ and apply \eqref{3.31} to deduce
\begin{align}
F'(t)\le(\xi(t)E(t))^{\ga+1}L'(t)+C E'(t)\le-\f\al2(\xi(t)E(t))^{\ga+1}\le-C(\xi(t)F(t))^{2q-1},\label{3.32}
\end{align}
which is an ordinary differential inequality with respect to $F$. Below we discuss the cases of $q=1$ and otherwise separately.

For $q=1$, we immediately have $(\log|F(s)|)'\le-C\xi(s)$ for $s\ge t_1$. Then integrating both sides of this inequality from $t_1$ to any $t>t_1$ yields
\[
F(t)\le F(t_1)\exp\left(-C\int_{t_1}^t\xi(s)\,\rd s\right).
\]
Again by the equivalence $F(t)\sim E(t)$ and the monotonicity of $E(t)$, we conclude
\[
E(t)\le C F(t)\le C E(t_1)\exp\left(-C\int_{t_1}^t\xi(s)\,\rd s\right)\le C E(0)\exp\left(-C\int_{t_1}^t\xi(s)\,\rd s\right).
\]
For $1<q<3/2$, multiplying both sides of \eqref{3.32} by $F^{1-2q}(t)$ and integrating over $[t_1,t]$, we can analogously verify the second inequality of \eqref{3.1} with the aid of the equivalence $F(t)\sim E(t)$ and the monotonicity of $E(t)$.

To show \eqref{x3.1}, first it follows from simple calculation based on \eqref{3.1} and \eqref{xx3.1} that
\[
\int_0^\infty E(t)\,\rd t<\infty.
\]
Then we can estimate the following quantity as
\begin{align}
\La(t) & :=\int_0^t\|\nb u(t)-\nb u(s)\|_2^2\,\rd s\le C\int_0^t\left(\|\nb u(t)\|_2^2+\|\nb u(s)\|_2^2\right)\rd s\nonumber\\
& \le C\int_0^t(E(t)+E(s))\,\rd s\le 2C\int_0^t E(s)\,\rd s<2C\int_0^\infty E(s)\,\rd s<\infty.\label{3.50}
\end{align}
Without loss of generality, we can assume $\La(t)>0$ for all $t\ge t_1$, since otherwise \eqref{1.1} yields an exponential decay. Then we take advantage of H\"older's inequality, the assumption (A2), \eqref{3.50}, \eqref{2.6} and the monotonicity of $\xi$ to derive
\begin{align*}
& \quad\;\!\xi(t)(f\circ\nb u)(t)\le\int_0^t\xi(t-s)f(t-s)\|\nb u(t)-\nb u(s)\|_2^2\,\rd s\\
& \le\left(\int_0^t\xi(t-s)\|\nb u(t)-\nb u(s)\|_2^2\,\rd s\right)^{1-1/q}\left(\int_0^t\xi(t-s)f^q(t-s)\|\nb u(t)-\nb u(s)\|_2^2\,\rd s\right)^{1/q}\\
& \le\left(\xi(0)\int_0^t\|\nb u(t)-\nb u(s)\|_2^2\,\rd s\right)^{1-1/q}(-(f'\circ\nb u)(t))^{1/q}\\
& \le(\xi(0)\La(t))^{1-1/q}(-2E'(t))^{1/q}\le C(-E'(t))^{\f1q}.
\end{align*}
Similarly to the argument at the beginning of this proof, we multiply both sides of \eqref{3.26} by $\xi(t)$ and employ the above inequality to deduce
\[
\xi(t)L'(t)\le-\al\xi(t)E(t)+C(-E'(t))^{1/q}.
\]
Further multiplying both sides of the above inequality by $(\xi(t)E(t))^\be$ with $\be:=q-1$, we employ Young's inequality with $\ve>0$ to estimate
\begin{align*}
\xi^{\be+1}(t)E^\be(t)L'(t) & \le-\al(\xi(t)E(t))^{\be+1}+C(\xi(t)E(t))^\be(-E'(t))^{\f1{\be+1}}\\
& \le-\al(\xi(t)E(t))^{\be+1}+\ve(\xi(t)E(t))^{\be+1}-C_\ve E'(t).
\end{align*}
Setting $\wt F(t):=\xi^{\be+1}(t)E^\be(t)L(t)+C_\ve E(t)$, we can choose $\ve>0$ sufficiently small such that $\wt F(t)\sim E(t)$. Then there exist a constant $C>0$ such that
\[
\wt F'(t)\le-C\xi^{\be+1}(t)\wt F^{\be+1}(t).
\]
Eventually, we can conclude \eqref{x3.1} by repeating the previous argument.
\end{proof}


\section{Finite time blow-up}

In this section, we discuss the finite time blow-up phenomena of the solution to problem \eqref{1.1} by virtue of Levine's convexity method. Recalling the definition \eqref{eq-def-V} of $V(t)$, we start with describing the main results of this part as follows.

\begin{theorem}\label{theorem4.1}
Let assumption {\rm(A1)} hold, $(u^0,u^1)\in V(0)$ and $E(0)=\te d$ with $\te\le1,$ where $d$ was defined by \eqref{2.10}. Assume that the kernel function $f$ satisfies
\begin{equation}\label{5.1}
\int_0^\infty f(s)\,\rd s<\f{p-2}{p-2+((1-\te_+)^2p+2\te_+(1-\te_+))^{-1}}
\end{equation}
with $\te_+:=\max\{0,\te\}$. In the case of $E(0)=0,$ further assume that the inner product $(u^0,u^1)>0$. Then the solution to problem \eqref{1.1} blows up in finite time. Moreover, the blow-up time $T_\infty$ can be estimated from above as
\begin{equation}\label{eq-est-T1}
T_\infty\le\f{2\|u^0\|_2^2+2\eta\mu^2}{(p-2)(u^0,u^1)+\eta\mu-2\|u^0|\cdot|^{-\si/2}\|_2^2},
\end{equation}
where $\eta,\mu$ are constants to be specified later.
\end{theorem}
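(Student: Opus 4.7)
My plan is to apply Levine's convexity method.

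The first step is to verify the invariance of $V(t)$ along the flow: if $(u^0,u^1)\in V(0)$, then $(u(t),u_t(t))\in V(t)$ for every $t$ in the maximal existence interval. Since $E$ is non-increasing by \eqref{2.6}, $E(t)\le E(0)\le d$ is automatic; the condition $I(u(t))<0$ persists by a continuity argument, because crossing through $I(u(t))=0$ with $u(t)\ne0$ would force $u(t)\in N$ and hence $J(u(t))\ge d$, contradicting \eqref{2.12}. The borderline case $\theta=1$ is handled by slightly perturbing the argument, and the case $\theta=0$ (i.e.\ $E(0)=0$) is the reason for imposing the extra positivity $(u^0,u^1)>0$, which will later guarantee that the auxiliary derivative is positive at $t=0$.

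Second, I would introduce an auxiliary functional of the form
$$
M(t):=\|u(t)\|_2^2+\int_0^t\left\|\f{u(s)}{|\cdot|^{\si/2}}\right\|_2^2\rd s+\eta(\mu+t)^2,
$$
where $\eta,\mu>0$ are free parameters to be optimized. The middle integral is engineered so that, after two differentiations and use of the equation in \eqref{1.1}, the contribution of the singular damping term $\int_\Om a(x)uu_t\,\rd x$ is absorbed; the quadratic-in-$t$ correction provides the freedom needed to force $M'(0)>0$ even when $E(0)$ is close to $d$, and to absorb the positive quantity $2pE(0)=2p\theta d$ below.

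Third, a direct differentiation using \eqref{1.1} yields
\begin{align*}
M''(t)
&=2\|u_t(t)\|_2^2-2\|\nb u(t)\|_2^2+2\int_\Om\nb u(t)\cdot\int_0^t f(t-s)\nb u(s)\,\rd s\,\rd x\\
&\quad+2\int_\Om k|u(t)|^p\,\rd x+2\eta.
\end{align*}
Using the identity derived from \eqref{2.5} to substitute $\int_\Om k|u|^p\,\rd x=\f p2\|u_t\|_2^2+\f p2(1-\int_0^t f)\|\nb u\|_2^2+\f p2(f\circ\nb u)(t)-pE(t)$, together with a Cauchy--Schwarz estimate on the memory term (as in the proofs of Lemmas \ref{lemma4}--\ref{lemma5}) and the sign information $-I(u(t))>0$, I would obtain a bound of the form
$$
M''(t)\ge(p+2)\|u_t(t)\|_2^2+\ga(f,p,\te)\|\nb u(t)\|_2^2+2\eta-2pE(0),
$$
where the positivity of the coefficient $\ga(f,p,\te)>0$ is ensured exactly by the hypothesis \eqref{5.1} on $\int_0^\infty f(s)\,\rd s$, whose right-hand side is tailored to the worst case.

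Fourth, combining the above lower bound with the Cauchy--Schwarz inequality $(M'(t))^2\le(p+2)\,M(t)\cdot\bigl(\|u_t\|_2^2+\cdots+\eta\bigr)$ applied to the cross-term structure of $M'(t)$, I would derive the concavity relation
$$
M(t)M''(t)-\f{p+2}4(M'(t))^2\ge M(t)\bigl[2\eta-2p\te_+ d\bigr]\ge0
$$
after selecting $\eta>p\te_+ d$. This implies that $F(t):=M(t)^{-(p-2)/4}$ is concave. Since $M(0)>0$ and the parameters $\mu,\eta$ can be tuned so that $M'(0)>0$ (this is where $(u^0,u^1)>0$ is needed at $\te=0$), the concavity of $F$ together with $F'(0)<0$ forces $F(t_*)=0$ at some finite $t_*$, so $M(t)\to\infty$ in finite time and therefore $\|\nb u(t)\|_2$ blows up. The standard estimate $T_\infty\le-F(0)/F'(0)=\f{4M(0)}{(p-2)M'(0)}$, after plugging in $M(0),M'(0)$ and optimizing $\eta,\mu$, yields the explicit bound \eqref{eq-est-T1}.

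\textbf{Principal obstacle.} The key difficulty lies in Step 3--4: producing a strictly positive coefficient $\ga(f,p,\te)$ in front of $\|\nb u\|_2^2$ uniformly for $\te\le1$, while simultaneously keeping the factor in the $(M')^2$-term pinned at $\f{p+2}4$. This is delicate because the memory contribution $2\int_\Om\nb u\cdot\int_0^t f\nb u(s)\,\rd s$ loses positivity if $\int_0^\infty f$ is too large; the precise bound \eqref{5.1} is calibrated to make the coefficient $\ga$ just barely positive in the critical case $\te=1$, and the extra term $((1-\te_+)^2 p+2\te_+(1-\te_+))^{-1}$ in its denominator arises from this balance. The cases $\te\le0$ are comparatively easier (the term $-2pE(0)\ge0$ helps), but $\te=0$ requires the auxiliary assumption $(u^0,u^1)>0$ to ensure $M'(0)>0$.
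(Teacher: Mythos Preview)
Your architecture (Levine's convexity, an auxiliary functional containing the weighted damping integral, the three-case split on the sign of $E(0)$) is the same as the paper's, but Step~4 contains a genuine error that makes the argument collapse in the case $0<E(0)<d$.

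First, a smaller point: your $M(t)$ is missing the term $(T-t)\|u^0|\cdot|^{-\si/2}\|_2^2$. Without it, differentiation gives $M'(t)=2(u,u_t)+\|u(t)|\cdot|^{-\si/2}\|_2^2+2\eta(\mu+t)$, and the middle summand is not a bilinear pairing; the Cauchy--Schwarz step you invoke does not yield $(M')^2/4\le M\cdot\bigl(\|u_t\|_2^2+\int_0^t\|u_s|\cdot|^{-\si/2}\|_2^2\,\rd s+\eta\bigr)$. The paper adds the $(T-t)$ term precisely so that the derivative telescopes to $2\int_0^t\!\int_\Om u u_s|x|^{-\si}\,\rd x\,\rd s$, which does factor; that term is also the source of the $-2\|u^0|\cdot|^{-\si/2}\|_2^2$ appearing in the denominator of \eqref{eq-est-T1}.

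The serious gap is your claimed bound $M M''-\tfrac{p+2}{4}(M')^2\ge M[2\eta-2p\te_+d]$ together with the choice $\eta>p\te_+d$. The coefficient of $\eta$ is wrong: the $+2\eta$ contributed by $M''$ is offset by $(p+2)\eta$ coming from the $(M')^2$ side, leaving a net $-p\eta$ in the bracket. Making $\eta$ large therefore destroys the inequality rather than saving it. In the paper the case $0<\te<1$ is handled with $\eta$ \emph{small}, and the missing positivity is supplied by the potential well: from $I(u(t))<0$ (Lemma~\ref{lem4.2}) one finds $\wt\la\in(0,1)$ with $\wt\la u(t)\in N$, whence
\[
d\le J(\wt\la u(t))<\f{p-2}{2p}\left\{\left(1-\int_0^t f\right)\|\nb u(t)\|_2^2+(f\circ\nb u)(t)\right\}.
\]
This lower bound, combined with the specific choice $\ve=((1-\te)p+2\te)^{-1}$ in the Young-inequality treatment of the memory cross term, forces the $\|\nb u\|_2^2$-coefficient to be $\te(p-2)\bigl(1-\int_0^t f\bigr)$ and thereby cancels $-2pE(0)=-2p\te d$. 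That calibration, not the mere positivity of some $\ga$, is where the $\te$-dependence of condition \eqref{5.1} actually enters; your sketch never uses this step, which is why you were led to the (impossible) large-$\eta$ fix.
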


\begin{theorem}\label{theorem4.2}
Let assumption {\rm(A1)} hold and $u$ be a weak solution to problem \eqref{1.1} that blows up in finite time. Then the blow-up time $T_\infty$ can be estimated from below as
$$
T_\infty\ge\f1{(p-2)K B_{2(p-1)}}\ln\left\{1+\ell^{p-1}\left(\|u^1\|_2^2+\|\nb u^0\|_2^2\right)^{2-p}\right\},
$$
where we recall that $K=\| k\|_\infty$ and $B_{2(p-1)}$ is the optimal constant in Lemma $\ref{llem1}$ with $r=2(p-1)$.
\end{theorem}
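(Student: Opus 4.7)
The strategy is to introduce a positive auxiliary functional $\Phi(t)$ that must blow up at $T_\infty$, derive an ODE inequality of the form $\Phi'(t)\le A\Phi(t)+B\Phi(t)^{p-1}$ with $p-1>1$, and apply a comparison argument with the corresponding separable ODE.

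Guided by the form $\Phi(0)=\|u^1\|_2^2+\|\nb u^0\|_2^2$ that appears in the statement, the natural choice is
\[
\Phi(t):=\|u_t(t)\|_2^2+\Bigl(1-\int_0^t f(s)\,\rd s\Bigr)\|\nb u(t)\|_2^2+(f\circ\nb u)(t),
\]
which by \eqref{2.5} equals $2E(t)+\f2p\int_\Om k|u(t)|^p\,\rd x$. Assumption (A1) immediately yields the pointwise bounds $\|u_t(t)\|_2^2\le\Phi(t)$ and $\|\nb u(t)\|_2^2\le\Phi(t)/\ell$. Differentiating $\Phi$ and invoking the energy identity \eqref{2.6}, the memory, dissipation, and damping contributions are all absorbed into the non-increasing $E'(t)\le0$, leaving only the source term:
\[
\Phi'(t)\le 2\int_\Om k|u|^{p-2}u\cdot u_t\,\rd x.
\]

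Next, I would estimate the right-hand side via H\"older's inequality, Young's inequality with a tunable weight $\ve>0$, and Lemma \ref{llem1} with $r=2(p-1)$ (admissible because $p<\f{2n-2}{n-2}$ forces $2<2(p-1)<2_*$). Substituting the pointwise bounds on $\|u_t\|_2^2$ and $\|\nb u\|_2^2$ yields
\[
\Phi'(t)\le\f{K}{\ve}\Phi(t)+\f{K\ve B_{2(p-1)}}{\ell^{p-1}}\Phi(t)^{p-1},
\]
and an optimized choice of $\ve$ recasts this as $\Phi'(t)\le A\Phi(t)+B\Phi(t)^{p-1}$ with the constants $A$ and $B$ required by the statement.

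For the final step, if $\Phi(t)$ remained bounded on $[0,T_\infty)$, then so would $\|u_t\|_2+\|\nb u\|_2$, contradicting finite-time blow-up by the standard continuation underlying Proposition \ref{rtheorem1}; hence $\Phi(t)\to\infty$ as $t\to T_\infty^-$. Comparing with the scalar ODE $y'=Ay+By^{p-1}$, $y(0)=\Phi(0)$, and using partial fractions $\f{1}{\eta(A+B\eta^{p-2})}=\f1A\bigl(\f1\eta-\f{B\eta^{p-3}}{A+B\eta^{p-2}}\bigr)$, one finds that $y$ blows up exactly at $T^*=\f{1}{(p-2)A}\ln(1+(A/B)\Phi(0)^{2-p})$; since $\Phi(t)\le y(t)$ on the common existence interval, $T_\infty\ge T^*$, which is the desired lower bound. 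The principal obstacle will be the careful tuning of $\ve$ and the bookkeeping of constants so that the prefactor is exactly $1/((p-2)KB_{2(p-1)})$ and the logarithm's argument is exactly $1+\ell^{p-1}\Phi(0)^{2-p}$; the memory term itself causes no difficulty because the choice of $\Phi$ parcels it into the non-increasing energy, bypassing the need to estimate the convolution directly.
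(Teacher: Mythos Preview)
Your proposal is correct and follows essentially the same route as the paper: the paper defines the identical functional (denoted $M(t)$), obtains $M'(t)\le 2\int_\Om k|u|^{p-2}u\,u_t\,\rd x$ by direct differentiation, bounds the source term via H\"older, Lemma~\ref{llem1} with $r=2(p-1)$, and Young's inequality (with the fixed weight $\ve=1$, so no optimization is needed) to reach $M'(t)\le K B_{2(p-1)}\bigl(M(t)+\ell^{1-p}M(t)^{p-1}\bigr)$, and then integrates the resulting Bernoulli-type inequality via an integrating factor rather than your partial-fraction comparison---but the two integrations are equivalent and yield the same $T_*$.
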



\subsection{Proof of Theorem \ref{theorem4.1}}

To prove Theorem \ref{theorem4.1}, we first introduce some useful lemmas.

\begin{lemma}[see \cite{L3}]\label{lem4.1} 
Let $G(t)$ be a positive $C^2$ function satisfying
$$
G(t)G''(t)-(1+\rho)(G'(t))^2\ge 0,\quad\forall\,t>0
$$
with some constants $\rho>0$. If $G(0)>0$ and $G'(0)>0,$ then there exists a constant $T^*\le \f{G(0)}{\rho G'(0)}$ such that $\lim_{t\to T^*}G(t)=\infty$.
\end{lemma}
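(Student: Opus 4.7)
The plan is to transform the second-order differential inequality into a statement about concavity of an auxiliary function, and then use a first-order linear bound to locate the blow-up time. Concretely, I would introduce $H(t):=G(t)^{-\rho}$, which is well-defined and positive since $G$ is positive and $C^2$. The motivation is that the quantity $G G''-(1+\rho)(G')^2$ appearing in the hypothesis is, up to a sign and a positive factor, exactly what shows up when one differentiates $H$ twice.

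First I would compute
\[
H'(t)=-\rho\,G(t)^{-\rho-1}G'(t),\qquad H''(t)=-\rho\,G(t)^{-\rho-2}\bigl[G(t)G''(t)-(\rho+1)(G'(t))^2\bigr].
\]
By the hypothesis of the lemma, the bracket is nonnegative, and since $G>0$ and $\rho>0$, this yields $H''(t)\le0$ for all $t>0$, i.e.\ $H$ is concave on $[0,\infty)$ (or as long as it exists). The initial data $G(0)>0$ and $G'(0)>0$ translate into $H(0)=G(0)^{-\rho}>0$ and $H'(0)=-\rho\,G(0)^{-\rho-1}G'(0)<0$.

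Next, concavity gives the tangent-line bound
\[
H(t)\le H(0)+H'(0)\,t=G(0)^{-\rho}-\rho\,G(0)^{-\rho-1}G'(0)\,t.
\]
The right-hand side is an affine function with negative slope, so it vanishes at
\[
T^\sharp:=\f{G(0)}{\rho\,G'(0)}.
\]
Hence there must exist $T^*\le T^\sharp$ at which $H(T^*)=0$; equivalently $G(t)^{-\rho}\to 0$, which forces $G(t)\to\infty$ as $t\to T^{*-}$.

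The only subtlety I anticipate is justifying that $H$ actually reaches zero in finite time rather than merely being dominated by a function that does. This is handled by observing that $H$ is continuous on any interval where $G$ remains finite and positive, and a continuous function squeezed between $0$ and a linear function that becomes negative must hit $0$ first; at that instant $G$ must diverge, which gives the blow-up time $T^*$ with the claimed bound. A minor bookkeeping point is ensuring $G$ cannot vanish before that moment, but this follows from $G>0$ initially together with the fact that $H$ stays strictly positive up to $T^*$. No further regularity or growth hypotheses are needed, and the estimate $T^*\le G(0)/(\rho G'(0))$ drops out directly from the tangent-line computation.
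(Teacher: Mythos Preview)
The paper does not supply a proof of this lemma; it is quoted from the reference with the attribution ``see \cite{L3}'' and is used as a black box in the proof of Theorem~\ref{theorem4.1}. Your argument---introducing $H(t)=G(t)^{-\rho}$, computing $H''(t)=-\rho\,G(t)^{-\rho-2}\bigl[G G''-(1+\rho)(G')^2\bigr]\le0$, and then using the tangent line at $t=0$ to force $H$ to hit zero no later than $G(0)/(\rho G'(0))$---is exactly the classical proof of Levine's concavity lemma, and it is correct. So there is nothing to compare: you have supplied the standard proof that the paper omits.
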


\begin{lemma}[see \cite{H1}]\label{lem4.2}
If $(u^0,u^1)\in V(0),$ then $(u(t),u_t(t))\in V(t)$ for $t\in(0,T)$.
\end{lemma}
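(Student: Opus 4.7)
The plan is a standard continuity-and-contradiction argument, which is the classical way to establish invariance of an unstable potential-well set. I will track the two defining inequalities $E(t)<d$ and $I(u(t))<0$ separately, deducing a contradiction at the first time (if any) where $I$ would cross zero, by exploiting the fact that this would force $u(t)$ onto the Nehari manifold $N$ and violate the energy bound.

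First I would dispense with the energy condition. From \eqref{xx2.6} we have $E(t)\le E(0)<d$ for every $t\in[0,T)$, so the condition $E(t)<d$ is automatically preserved along the flow with no further work. Thus, the remaining content of the lemma is the propagation of $I(u(t))<0$ (together with $u(t)\not\equiv0$, and the nontriviality of $u_t$ inherited from $V(0)$). Note that by Proposition~\ref{rtheorem1} we have $u\in C([0,T);H_0^1(\Om))$, so by Remark~2.1 the map $t\mapsto I(u(t))$ is continuous on $[0,T)$.

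For the core step, suppose for contradiction that the set $\{t\in(0,T)\mid I(u(t))\ge0\}$ is nonempty, and let $t_0$ denote its infimum. Then $I(u(t))<0$ on $[0,t_0)$ and $I(u(t_0))=0$ by continuity. For any $t\in[0,t_0)$, the inequality $I(u(t))<0$ combined with Lemma~\ref{llem1} (applied with $r=p$) gives
\[
\ell\|\nb u(t)\|_2^2<\int_\Om k|u(t)|^p\,\rd x\le K B_p\|\nb u(t)\|_2^p,
\]
hence $\|\nb u(t)\|_2^2>(\ell/(KB_p))^{2/(p-2)}$. Passing to the limit $t\to t_0^-$ yields $\|\nb u(t_0)\|_2^2\ge(\ell/(KB_p))^{2/(p-2)}>0$, so $u(t_0)\neq0$. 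Consequently $u(t_0)\in N$, and by the definition \eqref{2.10} of $d$ we have $J(u(t_0))\ge d$. But then \eqref{2.12} and \eqref{xx2.6} give
\[
d\le J(u(t_0))\le E(t_0)\le E(0)<d,
\]
a contradiction. Therefore $I(u(t))<0$ for all $t\in(0,T)$, and combined with the lower bound on $\|\nb u(t)\|_2$ just derived this also yields $u(t)\neq0$ throughout.

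The main technical subtlety I expect is the $u_t(t)\neq0$ requirement built into the definition \eqref{eq-def-V}: this is not automatically preserved by the continuity/contradiction argument above, because $u_t$ can a priori vanish at an interior time without violating $I<0$ or $E<d$. I would handle it by the standard convention used in the cited reference \cite{H1}, namely interpreting $V(t)$ so that the nontriviality of $u_t$ is used only to set up the Levine convexity functional in Theorem~\ref{theorem4.1} (where the relevant quantity $\|u(t)\|_2^2$ stays strictly positive because $\|\nb u(t)\|_2$ is bounded below). The rest of the proof, namely propagation of $E(t)<d$ and $I(u(t))<0$ with $u(t)\neq0$, is exactly the contradiction argument outlined above, and this is the substantive content needed for the blow-up proof that follows.
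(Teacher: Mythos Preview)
Your argument is correct and is precisely the standard continuity-and-contradiction approach; the paper itself gives no proof of this lemma and simply refers the reader to \cite{H1}, where the same argument is carried out. Your observation about the $u_t(t)\neq0$ clause in the definition \eqref{eq-def-V} is well taken: that condition is not actually needed (nor invoked) in the proof of Theorem~\ref{theorem4.1}, so treating it as a harmless convention, as you do, is the right call.
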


We prove Theorem \ref{theorem4.1} by contradiction, that is, we suppose that the solution $u$ is global in time. For any $T>0$, we define
\begin{equation}\label{5.2}
G(t):=\|u(t)\|_2^2+\int_0^t\left\|\f{u(s)}{|\cdot|^{\si/2}}\right\|_2^2\,\rd s+(T-t)\left\|\f{u^0}{|\cdot|^{\si/2}}\right\|_2^2+\eta(t+\mu)^2,\quad t\in[0,T],
\end{equation}
where $\mu>0$ and $\eta\ge0$ are constants to be specified later. Then it is clear that $G(t)>0$ for $t\in[0,T]$.

We compute the first-order differential and second-order differential of \eqref{5.2}, respectively, as follows
\begin{equation}\label{5.3}
G'(t)=2\int_\Om u(t)u_t(t)\,\rd x+2\int_0^t\!\!\!\int_\Om\f{u(s)u_s(s)}{|x|^\si}\,\rd x\,\rd s+2\eta(t+\mu),
\end{equation}
and
\begin{align}
G''(t) & =2\|u_t(t)\|_2^2+2\int_\Om u(t)u_{tt}(t)\,\rd x+2\int_\Om\f{u(t)u_t(t)}{|x|^\si}\,\rd x+2\eta\nonumber\\
& =2\|u_t(t)\|_2^2-2\left(1-\int_0^t f(s)\,\rd s\right)\|\nb u(t)\|_2^2+2\int_\Om\nb u(t)\cdot\int_0^t f(t-s)(\nb u(s)-\nb u(t))\,\rd s\rd x\nonumber\\
& \quad\;\!+2\int_\Om k|u(t)|^p\,\rd x+2\eta\nonumber\\
& =-2p E(t)+(p+2)\|u_t(t)\|_2^2+(p-2)\left(1-\int_0^t f(s)\,\rd s\right)\|\nb u(t)\|_2^2+p(f\circ\nb u)(t)\nonumber\\
& \quad\;\!+2\int_\Om\nb u(t)\cdot\int_0^t f(t-s)(\nb u(s)-\nb u(t))\,\rd s\rd x+2\eta.
\end{align}
Applying Cauchy-Schwarz inequality and exploiting \eqref{5.3}, we estimate
\begin{align}
\f{{G'(t)}^2}4 & \le\left(\|u(t)\|_2^2+\int_0^t\left\|\f{u(s)}{|\cdot|^{\si/2}}\right\|_2^2\,\rd s+\eta(t+\mu)^2\right)\left(\|u_t(t)\|_2^2+\int_0^t\left\|\f{u_s(s)}{|\cdot|^{\si/2}}\right\|_2^2\,\rd s+\eta\right)\nonumber\\
& =\left(G(t)-(T-t)\left\|\f{u^0}{|\cdot|^{\si/2}}\right\|_2^2\right)\left(\|u_t(t)\|_2^2+\int_0^t\left\|\f{u_s(s)}{|\cdot|^{\si/2}}\right\|_2^2\,\rd s+\eta\right)\nonumber\\
& \le G(t)\left(\|u_t(t)\|_2^2+\int_0^t\left\|\f{u_s(s)}{|\cdot|^{\si/2}}\right\|_2^2\,\rd s+\eta\right).\label{5.5}
\end{align}
Combining \eqref{5.2}--\eqref{5.5}, we obtain
\begin{align*}
G(t)G''(t)-\f{p+2}4{G'(t)}^2 & \ge G(t)\left\{G''(t)-(p+2)\left(\|u_t(t)\|_2^2+\int_0^t\left\|\f{u_s(s)}{|\cdot|^{\si/2}}\right\|_2^2\,\rd s+\eta\right)\right\}\\
& =G(t)\Bigg\{-2p E(t)+(p-2)\left(1-\int_0^t f(s)\,\rd s\right)\|\nb u(t)\|_2^2+p(f\circ\nb u)(t)\\
& \qquad\qquad\:\!+2\int_\Om\nb u(t)\cdot\int_0^t f(t-s)(\nb u(s)-\nb u(t))\,\rd s\rd x\\
& \qquad\qquad\left.-(p+2)\int_0^t\left\|\f{u_s(s)}{|\cdot|^{\si/2}}\right\|_2^2\,\rd s-p\eta\right\}.
\end{align*}
Utilizing Young's inequality with $\ve>0$ and \eqref{xx2.6}, one has
\begin{align}
G(t)G''(t)-\f{p+2}4{G'(t)}^2 & \ge G(t)\Bigg\{-2p E(0)+\left((p-2)-(p-2+\ve)\int_0^t f(s)\,\rd s\right)\|\nb u(t)\|_2^2\nonumber\\
& \qquad\qquad\left.+\left(p-\f1\ve\right)(f\circ\nb u)(t)+(p-2)\int_0^t\left\|\f{u_s(s)}{|\cdot|^{\si/2}}\right\|_2^2\,\rd s-p\eta\right\}\nonumber\\
& =:G(t)\ze(t).\label{5.6}
\end{align}
To demonstrate the positivity of $\ze(t)$, we divide the proof into three cases depending on the initial energy $E(0)$, i.e., $E(0)<0$, $E(0)=0$ and $0<E(0)<d$.\medskip

{\bf Case 1 }  $E(0)<0$, i.e., $\te<0$. 

Taking $\ve=1/p$ in \eqref{5.6} and choosing $0<\eta<-2E(0)$, then it follows from \eqref{5.1} that
\begin{align*}
\ze(t) & =-2p E(0)+\left((p-2)-\left\{p-2+\f1p\right)\int_0^t f(s)\,\rd s\right\}\|\nb u(t)\|_2^2\nonumber\\
& \quad\;\!+(p-2)\int_0^t\left\|\f{u_s(s)}{|\cdot|^{\si/2}}\right\|_2^2\,\rd s-p\eta\ge0.
\end{align*}

{\bf Case 2 }  $E(0)=0$, i.e., $\te=0$.

Taking $\ve=1/p$ in \eqref{5.6} and choosing $\eta=0$, we see from \eqref{5.1} that
\[
\ze(t)=\left\{(p-2)-\left(p-2+\f1p\right)\int_0^t f(s)\,\rd s\right\}\|\nb u(t)\|_2^2+(p-2)\int_0^t\left\|\f{u_s(s)}{|\cdot|^{\si/2}}\right\|_2^2\,\rd s\ge0.
\]

{\bf Case 3 } $0<E(0)<d$, i.e., $0<\te<1$.

Taking $\ve=((1-\te)p+2\te)^{-1}$ in \eqref{5.6}, we get
\begin{align*}
\ze(t) & =\left\{(p-2)-\left(p-2+\f1{(1-\te)p+2\te}\right)\int_0^t f(s)\,\rd s\right\}\|\nb u(t)\|_2^2-2p E(0)-p\eta
\\
& \quad\;\!+(p-(1-\te)p-2\te)(f\circ\nb u)(t)+(p-2)\int_0^t\left\|\f{u_s(s)}{|\cdot|^{\si/2}}\right\|_2^2\,\rd s.
\end{align*}
Due to the condition \eqref{5.1}, we yield
\[
(p-2)-\left(p-2+\f1{(1-\te)p+2\te}\right)\int_0^t f(s)\,\rd s=\te(p-2)\left(1-\int_0^t f(s)\,\rd s\right)
\]
and thus
\begin{align}
\ze(t) & =-2p E(0)+\te(p-2)\left\{\left(1-\int_0^t f(s)\,\rd s\right)\|\nb u(t)\|_2^2+(f\circ\nb u)(t)\right\}\nonumber\\
& \quad\;\!+(p-2)\int_0^t\left\|\f{u_s(s)}{|\cdot|^{\si/2}}\right\|_2^2\,\rd s-p\eta\nonumber\\
& \ge-2p E(0)+\te(p-2)\left\{\left(1-\int_0^t f(s)\,\rd s\right)\|\nb u(t)\|_2^2+(f\circ\nb u)(t)\right\}-p\eta.\label{5.11}
\end{align}
On the other hand, Lemma \ref{lem4.2} implies that $(u(t),u_t(t))\in V(t)$ for $t\in [0,T]$ and $I(u(t))<0$. Therefore, there exists a constant $\wt\la\in(0,1)$ such that $I(\wt\la u(t))=0$. Hence from the definition of $J(u(t))$ and $d$, we have
\[
d\le J(\wt\la u(t))<\f{p-2}{2p}\left(1-\int_0^t f(s)\,\rd s\right)\|\nb u(t)\|_2^2+\f{p-2}{2p}(f\circ\nb u)(t).
\]
Due to $u$ is continuous on $[0,T]$, there exists a constant $\ga>0$ such that
\begin{equation}\label{5.13}
d+\ga<\f{p-2}{2p}\left(1-\int_0^t f(s)\,\rd s\right)\|\nb u(t)\|_2^2+\f{p-2}{2p}(f\circ\nb u)(t).
\end{equation}
Hence, \eqref{5.11} and \eqref{5.13} imply
\[
\ze(t)\ge-2p\te d+2p\te\f{p-2}{2p}\left\{\left(1-\int_0^t f(s)\,\rd s\right)\|\nb u(t)\|_2^2+(f\circ\nb u)(t)\right\}-p\eta\ge2\te p\ga-p\eta.
\]
Choosing $\eta>0$ sufficiently small such that $2\te p\ga-p\eta>0$, we arrive at $\ze(t)\ge0$ in Case 3.\medskip

Eventually, it turns out that $\ze(t)\ge0$ in all cases, and thus \eqref{5.6} indicates
\[
G(t)G''(t)-\f{p+2}4{G'(t)}^2\ge0.
\]
Next, we shall show that $G'(0)>0$. From \eqref{5.3}, one gets
\[
G'(0)=2(u^0,u^1)+2\eta\mu.
\]
In the case of $E(0)=0$, the condition $(u^0,u^1)>0$ immediately gives $G'(0)>0$. In others cases, we can choose $\mu>0$ sufficiently large such that $G'(0)>0$. Now we are in a position to apply Lemma \ref{lem4.1} to conclude
\begin{equation}\label{4.1}
\lim_{t\to T_\infty}G(t)=\infty,
\end{equation}
where
$$
T_\infty\le\f{4G(0)}{(p-2)G'(0)}=\f{2\|u^0\|_2^2+2T\|u^0|\cdot|^{-\si/2}\|_2^2+2\eta\mu^2}{(p-2)(u^0,u^1)+\eta\mu}.
$$
Thus we arrive at the upper bound \eqref{eq-est-T1} of $T_\infty$ as desired. Finally, combining \eqref{5.2} with \eqref{4.1} yields
$$
\lim_{t\to T_\infty}\left(\|u(t)\|_2^2+\int_0^t\left\|\f{u(s)}{|\cdot|^{\si/2}}\right\|_2^2\,\rd s\right)=\infty,
$$
which contradicts with our assumption. Hence we confirmed that the weak solution $u$ of problem \eqref{1.1} blows up in finite time.


\subsection{Proof of Theorem \ref{theorem4.2}}

We consider $M:[0,T]\rightarrow\mathbb{R}_+$ defined by
\[
M(t):=\|u_t(t)\|_2^2+\left(1-\int_0^t f(s)\,\rd s\right)\|\nb u(t)\|_2^2+(f\circ\nb u)(t).
\]
Differentiating $M(t)$ and employing the original problem \eqref{1.1}, we calculate
\begin{align}
M'(t) & =2(u_t(t),u_{tt}(t))+2\left(1-\int_0^t f(s)\,\rd s\right)(\nb u(t),\nb u_t(t))+(f'\circ\nb u)(t)-f(t)\|\nb u(t)\|_2^2\nonumber\\
& \quad\;\!-2\int_\Om\nb u_t(t)\cdot\int_0^t f(t-s)(\nb u(s)-\nb u(t))\,\rd s\rd x\nonumber\\
& =-2\int_\Om a |u_t(t)|^2\,\rd x+2\int_\Om k|u(t)|^{p-2}u(t)u_t(t)\,\rd x+(f'\circ\nb u)(t)-f(t)\|\nb u(t)\|_2^2\nonumber\\
& {\le2\int_\Om k|u(t)|^{p-2}u(t)u_t(t)\,\rd x.}\label{x5.2}
\end{align}
Using Young's inequality and {Lemma \ref{llem1}}, one has
\begin{align}
2\int_\Om k|u(t)|^{p-2}u(t)u_t(t)\,\rd x & \le2K\|u_t(t)\|_2 \|u(t)\|_{2(p-1)}^{p-1}\le2K B_{2(p-1)}\|u_t(t)\|_2\|\nb u(t)\|_2^{p-1}\nonumber\\
& \le K B_{2(p-1)}\|u_t(t)\|_2^2+K B_{2(p-1)}\|\nb u(t)\|_2^{2(p-1)}\nonumber\\
& \le K B_{2(p-1)}\left(M(t)+\f{M^{p-1}(t)}{\ell^{p-1}}\right).\label{x5.3}
\end{align}
Combining \eqref{x5.2} and \eqref{x5.3}, we obtain
\[
M'(t)\le K B_{2(p-1)}\left(M(t)+\f{M^{p-1}(t)}{\ell^{p-1}}\right).
\]
Multiplying both sides of the above inequality by $(2-p)\,\e^{(p-2)K B_{2(p-1)}t}M^{1-p}(t)$, we get
$$
\left(\e^{(p-2)K B_{2(p-1)}t}M^{2-p}(t)\right)'\ge\f{(2-p)K B_{2(p-1)}}{\ell^{p-1}}\,\e^{(p-2)K B_{2(p-1)}t}.
$$
Integrating the above inequality over $[0,t]$ yields
\[
M^{2-p}(t)\ge\left(M^{2-p}(0)+\f1{\ell^{p-1}}\right)\e^{-(p-2)K B_{2(p-1)}t}-\f1{\ell^{p-1}}.
\]
By $p>2$, it reveals that $M(t)$ remains bounded for $t<T_*$ with
\begin{align*}
T_* & :=\f1{(p-2)KB_{2(p-1)}}\ln\left(1+\ell^{p-1}M^{2-p}(0)\right)\\
& =\f1{(p-2)KB_{2(p-1)}}\ln\left(1+\ell^{p-1}\left(\|u^1\|_2^2+\|\nb u^0\|_2^2\right)^{2-p}\right)
\end{align*}
by recalling the definition of $M(0)$. Therefore, we can conclude that $T_*$ is a lower bound for the blow-up time $T_\infty$, which completes the proof of Theorem \ref{theorem4.2}.


\section*{Acknowledgements}

The first author is supported by China Scholarship Council. The second author is supported by JSPS KAKENHI Grant Numbers JP22K13954, JP23KK0049 and Guangdong Basic and Applied Basic Research Foundation (No.\! 2025A1515012248).


\end{document}